\definecolor{darkred}{rgb}{0.8,0,0}
\definecolor{darkgreen}{rgb}{0,0.8,0}
\definecolor{darkblue}{rgb}{0,0,0.8}
\definecolor{lightred}{rgb}{1,0.8,0.8}
\definecolor{lightblue}{rgb}{0.8,0.8,1}
\definecolor{lightgreen}{rgb}{0.8,1,0.8}
\definecolor{codegreen}{rgb}{0,0.6,0}
\definecolor{codegray}{rgb}{0.5,0.5,0.5}
\definecolor{codepurple}{rgb}{0.58,0,0.82}
\definecolor{backcolour}{rgb}{0.95,0.95,0.95}
\lstdefinestyle{mystyle}{
    backgroundcolor=\color{backcolour},   
    commentstyle=\color{codegreen},
    keywordstyle=\color{magenta},
    numberstyle=\tiny\color{codegray},
    stringstyle=\color{codepurple},
    basicstyle=\ttfamily\tiny,
    breakatwhitespace=false,         
    breaklines=true,                 
    captionpos=b,                    
    keepspaces=true,                 
    numbers=left,                    
    numbersep=5pt,                  
    showspaces=false,                
    showstringspaces=false,
    showtabs=false,                  
    tabsize=2
}
\newtheorem{theorem}{Theorem}[section]
\newtheorem{corollary}[theorem]{Corollary} 
\newtheorem{lemma}[theorem]{Lemma}
\newtheorem{proposition}[theorem]{Proposition}
\theoremstyle{definition}
\newtheorem{example}[theorem]{Example}
\theoremstyle{remark}
\newtheorem{remark}[theorem]{Remark}
\newcommand{\Lv}{\mathrm{Lv}}
\newcommand{\Int}{\mathrm{Int}}
\let\OLDthebibliography\thebibliography
\renewcommand\thebibliography[1]{
  \OLDthebibliography{#1}
  \setlength{\parskip}{0pt}
  \setlength{\itemsep}{0pt plus 0.3ex}
}
\title{Halfspace Representations of Path Polytopes of Trees}
\author{Amer Goel}
\address{Amer Goel, University of Michigan}
\email{amergoel@umich.edu}
\author{Aida Maraj}
\address{Aida Maraj, Max Planck Institute of Molecular Cell Biology and Genetics, and  Center for Systems Biology Dresden}
\email{maraj@mpi-cbg.de}
\author{\'Alvaro Ribot}
\address{\'Alvaro Ribot, Harvard University}
\email{aribotbarrado@g.harvard.edu}
\date{}
\def\V{\mathcal{V}}
\def\R{\mathbb{R}}
\def\Int{\mathrm{Int}}
\def\one{\mathbbm{1}}
\def\conv{\mathrm{conv}}
\def\leaf{\mathrm{leaf}}
\def\aff{\operatorname{aff}}
\def\ba{\mathbf{a}}
\def\bc{\mathbf{c}}
\def\bv{\mathbf{v}}
\def\bb{\mathbf{b}}
\def\bx{\mathbf{x}}
\def\by{\mathbf{y}}
\newcommand{\ojoin}{\mathbin{\ooalign{$\bigcirc$\cr\hidewidth$\vee$\hidewidth\cr}}}
\begin{document}
\begin{abstract}
Given a tree $T$, its path polytope is the convex hull of the edge indicator vectors for the paths between any two distinct leaves in $T$. These polytopes arise naturally in polyhedral geometry and  applications, such as phylogenetics, tropical geometry, and algebraic statistics. We provide a minimal halfspace representation of these polytopes. The construction is made inductively using toric fiber products.
\end{abstract}
\maketitle
\thispagestyle{plain}
\section{Introduction} \label{sec: intro}
A polytope is given either by its vertex representation ($\mathcal{V}$-representation) or halfspace representation ($\mathcal{H}$-representation). The $\mathcal{V}$-representation describes a polytope as the convex hull of its vertices and the $\mathcal{H}$-representation defines a polytope as the intersection of halfspaces. Therefore, a $\mathcal{V}$-representation is a parametric description of the polytope, and an $\mathcal{H}$-representation is an implicit description of the polytope.
We study \textit{path polytopes} of trees, which are defined by their $\mathcal{V}$-representation as follows: given two distinct leaves~$i,j$ in a tree $T$ with vertex set $V$ and edge set $E$, let $i \leftrightarrow j$ be the set of edges in the path in $T$ between $i$ and $j$, and let $\bc^{i \leftrightarrow j}\in \{0,1\}^{|E|}$  be the indicator vector for the edges used in $i \leftrightarrow j$.
The path polytope of the tree $T$, denoted $P_T$, is the convex hull of the vectors~$\bc^{i\leftrightarrow j}$ for any two distinct leaves $i,j$ in $T$. 
Our goal is to find its $\mathcal{H}$-representation, which provides a membership test for $P_T$. The Fourier-Motzkin elimination algorithm \cite{jing2019complexity} outputs the $\mathcal{H}$-representation given the vertices of the polytope, but its time complexity grows exponentially with the dimension of the polytope. Therefore, computing the $\mathcal{H}$-representation of path polytopes with this algorithm is infeasible for large trees. In \Cref{thm:main} we provide an explicit description of the $\mathcal{H}$-representation of any path polytope.

Path polytopes arrive naturally in graph theory, combinatorics, and  applications. In tropical geometry, phylogenetic trees are parametrized by the path in \cite{maclagan2021introduction}, related to the tropical Grassmannian \cite{speyer2004tropical}. A key motivation of this work is the growing recognition that many statistical models defined on trees or graphs are parametrized by paths between their nodes. Examples include Brownian motion tree models, where the parametrization is given by paths among leaves in a phylogenetic tree \cite{boege2021reciprocal};  staged tree models, parametrized by the paths from the root to a leaf on a rooted tree \cite{duarte2020equations}; and (colored) Gaussian graphical models on block graphs \cite{coons2023symmetrically,misra2021gaussian}, parametrized by paths between any two nodes, among others.

An explicit description of the halfspaces for $P_T$ in terms of the tree structure provides a better understanding of the polytope and is useful. For instance, polytopes arising from  monomial parametrizations of log-linear models, such as our polytope induced by the path parametrization, have shown to be useful in Maximum Likelihood Estimation (MLE) problems \cite{duarte2023toric,fienberg2012maximum}; given a normalized vector of counts for a log-linear model, the MLE exists if and only if this vector belongs to the relative interior of the corresponding polytope. Here, the halfspace description of the polytope gives a membership test. In~\cite{hollering2024hyperplane}, the authors use halfspace representations to learn causal polytree structures from a combination of observational and interventional data. In fact, the path parametrization has already shown essential for all the progress related to the MLE of Brownian motion tree models \cite{boege2021reciprocal,coons2024maximum}. Therefore, we anticipate that the halfspace representation of our path polytope will be valuable for statistical applications in the models discussed earlier. 

Before presenting our main result, let us introduce the necessary notation. A \emph{graph} is a tuple $G = (V, E)$ where $V$ is a set of nodes and $E$ is a set
of unordered pairs of nodes, which are called edges. We assume $1 < |V| < \infty$. A \emph{path} in a graph is a sequence of edges that connects two nodes.
A \emph{tree} is a graph in which any pair of nodes is connected by exactly one path.
Given a vertex $u$ in a tree $T=(V,E)$, let $N(u)=\{v \in V\mid \{u,v\}\in E\}$ be the neighborhood of~$u$. The degree of a vertex $u$ in $T$ is $\deg(u) = |N(u)|$. \emph{Leaves} are nodes of degree one, so connected to exactly one other node. Let $\Lv(T) \subset V$ denote the set of leaves of $T$, and $\Int(T) = V \setminus \Lv(T)$ denote the set of internal nodes in $T$. A \emph{star tree} is a tree $T$ with $|\Int(T)| = 1$. See \Cref{fig:example-graph-tree-path}.

\begin{figure}[h]
    \centering
    % Graph
    \begin{subfigure}{0.24\textwidth}
        \centering
        \begin{tikzpicture}[node distance=1cm, every node/.style={circle, draw, thick, fill=black, inner sep=2pt}]
            \node (1) {};
            \node (2) [right of=1] {};
            \node (3) [right of=2] {};
            \node (4) [below of=3] {};
            \node (5) [left of=4] {};
            \node (6) [left of=5] {};
            
            \draw[thick] (1) -- (2);
            \draw[thick] (2) -- (3);
            \draw[thick] (2) -- (4);
            \draw[thick] (3) -- (4);
            \draw[thick] (5) -- (6);
        \end{tikzpicture}
        \vspace{0.25cm}
        \caption{Graph $G$.}
        \label{fig: example-graph}
    \end{subfigure}
    % Tree
    \begin{subfigure}{0.24\textwidth}
        \centering
        \begin{tikzpicture}[node distance=1cm, every node/.style={circle, draw, thick, fill=black, inner sep=2pt}]
            \node (1) {};
            \node (2) [above left of=1] {};
            \node (3) [below left of=1] {};
            \node (4) [right of=1] {};
            \node (5) [above right of=4] {};
            \node (6) [below right of=4] {};
            
            \draw[thick] (1) -- (2);
            \draw[thick] (1) -- (3);
            \draw[thick] (1) -- (4);
            \draw[thick] (4) -- (5);
            \draw[thick] (4) -- (6);
        \end{tikzpicture}
        \caption{Tree $T$.}
        \label{fig:example-tree}
    \end{subfigure}
    % Path
    \begin{subfigure}{0.24\textwidth}
        \centering
        % Tree with highlighted path
        \begin{tikzpicture}[node distance=1cm, every node/.style={circle, draw, thick, fill=black, inner sep=2pt}]
            \node (1) {};
            \node (2) [above left of=1, label=left:{$i$}] {};
            \node (3) [below left of=1] {};
            \node (4) [right of=1] {};
            \node (5) [above right of=4, label=right:{$j$}] {};
            \node (6) [below right of=4] {};
            
            % Edges
            \draw[dashed] (2) -- (1);
            \draw[dashed] (1) -- (4);
            \draw[thick] (1) -- (3);
            \draw[dashed] (4) -- (5);
            \draw[thick] (4) -- (6);
        \end{tikzpicture}
        \caption{Path $i \leftrightarrow j$ in $T$.}
        \label{fig: path}
    \end{subfigure}
    \begin{subfigure}{0.24\textwidth}
        \centering
        \begin{tikzpicture}[node distance=1cm, every node/.style={circle, draw, thick, fill=black, inner sep=2pt}]
            \node (1) {};
            \node (2) [above right of=1] {};
            \node (3) [below right of=1] {};
            \node (4) [above left of=1] {};
            \node (5) [below left of=1] {};
            
            \draw[thick] (1) -- (2);
            \draw[thick] (1) -- (3);
            \draw[thick] (1) -- (4);
            \draw[thick] (1) -- (5);
        \end{tikzpicture}
        \caption{Star tree $S_4$.}
        \label{fig:s3}
    \end{subfigure}
    \caption{Example of a graph, a tree, a path (dashed edges), and a star tree.}
    \label{fig:example-graph-tree-path}
\end{figure}
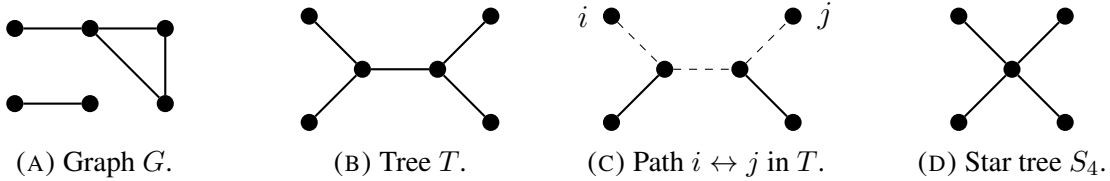
Let $E_{\leaf}(T) = \{ \{ u,v\} \in E \mid u \in \Lv(T) \text{ or } v \in \Lv(T)\}$ be the set of edges that have a leaf as an endpoint. Let $\R^E$ be the vector space with basis elements indexed by the set $E$, so $P_T \subset \R^E$. Our main result is the following.
\begin{theorem}
\label{thm:main}
    Given a tree $T=(V,E)$ with $|V|>3$ and no internal nodes of degree $2$, the path polytope $P_T$ has dimension $|E| - 1$ and a minimal $\mathcal{H}$-representation is given by
    \begin{align}\label{eq:main}
    \begin{cases}
        x_{\{u,v\}} &\geq 0  \text{\quad for all }  \{u,v\} \in E, \text{with }\deg(u),\deg(v) \neq 3\\
        -x_{\{u,v\}} + \sum\limits_{w \in N(u) \setminus \{v\}} x_{\{u,w\}} &\geq 0 \text{\quad for all } u \in \Int(T) \text{ and all } v \in N(u)\\
        \sum\limits_{\{u,v\} \in E_{\leaf}(T)} x_{\{u,v\}} &= 2.
    \end{cases}
\end{align}
\end{theorem}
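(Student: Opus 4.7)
The plan is to verify validity of each constraint in \Cref{eq:main} directly from the definition of $P_T$, and then to establish sufficiency and minimality simultaneously by induction on $|V|$ using a toric fiber product decomposition, as signaled in the abstract. The nonnegativity $x_e \geq 0$ is immediate since every vertex $\bc^{i \leftrightarrow j}$ lies in $\{0,1\}^{|E|}$. For the second family of inequalities, the key observation is that any path through an internal vertex $u$ enters and exits $u$ exactly once and therefore uses exactly two edges incident to $u$, while a path avoiding $u$ uses none; hence the quantity $-x_{\{u,v\}} + \sum_{w \in N(u)\setminus\{v\}} x_{\{u,w\}}$ evaluates to $0$ or $2$ on each $\bc^{i \leftrightarrow j}$, and is in particular nonnegative. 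The equality $\sum_{e \in E_{\leaf}(T)} x_e = 2$ records that every leaf-to-leaf path uses exactly two leaf edges, its first and its last. The dimension $\dim P_T = |E|-1$ will then follow by exhibiting $|E|$ affinely independent path indicators, which together with the displayed equality shows that the unique (up to scalar) linear relation on the $\bc^{i\leftrightarrow j}$ is precisely the leaf-sum equation.

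For the inductive argument, the base case is a star tree $S_n$ with $n \geq 3$ leaves: here $P_{S_n}$ is the second hypersimplex $\Delta(n,2) = \{x \in [0,1]^n : \sum_e x_e = 2\}$, and \Cref{eq:main} recovers this description since the vertex inequality $\sum_{w \in N(u) \setminus \{v\}} x_{\{u,w\}} - x_{\{u,v\}} \geq 0$ at the center $u$ is equivalent via the equation to $x_{\{u,v\}} \leq 1$. For the inductive step, select an internal edge $e = \{u,v\}$ of $T$ (one exists whenever $T$ is not a star) and split $T$ along $e$ into two smaller subtrees $T_1, T_2$ by replacing the remote endpoint on each side with a new leaf. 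The degrees of $u$ and $v$ are preserved on their respective sides, and the hypothesis of no internal degree-$2$ nodes passes to the $T_i$. Every leaf-to-leaf path of $T$ either lies entirely in one subtree or crosses $e$, and in the latter case it factors as the concatenation of a $T_1$-path ending at the new leaf with a $T_2$-path ending at the new leaf; at the level of defining inequalities this identifies $P_T$ with a suitably glued toric fiber product of $P_{T_1}$ and $P_{T_2}$ along the coordinate $x_e$.

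To prove sufficiency, the plan is to check that, under this fiber product, the inductively assumed $\cH$-representations of $P_{T_1}$ and $P_{T_2}$ assemble into exactly \Cref{eq:main} for $T$: inequalities local to one side carry over unchanged; the vertex inequalities at $u$ and $v$ coincide with those in $T$ because the relevant degrees are preserved; and the two leaf-sum equations of $T_1, T_2$, combined with the gluing condition on $x_e$, reproduce the single global leaf-sum equation of $T$. For minimality, each inequality in \Cref{eq:main} must be shown facet-defining by producing $|E|-1$ affinely independent path indicators saturating it. The degree hypothesis $\deg(u), \deg(v) \neq 3$ on the nonnegativities $x_e \geq 0$ is sharp: if, for instance, $\deg(u) = 3$ with $N(u) = \{v, w_1, w_2\}$, then summing the vertex inequalities at $u$ corresponding to $w_1$ and $w_2$ yields $2 x_{\{u,v\}} \geq 0$, rendering that nonnegativity redundant. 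The main obstacle will be the bookkeeping in the inductive step: tracking how leaf edges, internal vertex degrees, and the facet-defining nonnegativities transform under splitting, and verifying that no inequality of \Cref{eq:main} is lost or spuriously gained as one passes from $T_1, T_2$ back to $T$.
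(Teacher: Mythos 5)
Your overall strategy coincides with the paper's: verify validity of each constraint on the vertices, use the star tree $S_n$ (whose path polytope is the second hypersimplex $\Delta_{n,2}$) as the base case, split $T$ along an internal edge into two subtrees with new leaves, and realize $P_T$ as a toric fiber product so that facets can be tracked inductively. Your validity checks are correct (the vertex quantity $-x_{\{u,v\}}+\sum_{w\neq v}x_{\{u,w\}}$ indeed takes only the values $0$ and $2$ on path indicators), and your redundancy argument for the degree-$3$ case --- summing the two vertex inequalities at $u$ over $w_1$ and $w_2$ to get $2x_{\{u,v\}}\geq 0$ --- is a clean, correct explanation of why those nonnegativities must be dropped for minimality.

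However, there are two genuine gaps in the inductive step. First, the fiber product cannot be taken on $P_{T_1}$ and $P_{T_2}$ themselves: a leaf-to-leaf path of $T$ confined to one subtree has no counterpart in the other subtree, so the matching only works if each factor is first replaced by its free join with the origin, $P_{T_i}\ojoin\{\mathbf{0}\}$, with $\mathbf{0}$ playing the role of the ``empty path'' on the other side. Your phrase ``suitably glued \ldots along the coordinate $x_e$'' elides this; without the origin the product misses an entire class of vertices of $P_T$, and the common image of the projections is $\Delta_3$ rather than a segment. Second, and more seriously, your sufficiency argument --- that the inductively assumed $\mathcal{H}$-representations ``assemble into exactly'' the claimed one --- presupposes that \emph{every} facet of the fiber product arises from a facet of one of the factors. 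That is precisely the content of the paper's key lemma (facets of a toric fiber product over a standard simplex are of the form $F_1\times_{\Delta}P_2$ or $P_1\times_{\Delta}F_2$), which you neither state nor prove; without it, showing your inequalities are valid and facet-defining establishes minimality but not that they suffice to cut out $P_T$. Relatedly, the dimension count showing which candidate sets actually are facets (e.g.\ that the nonnegativity on the glued edge is facet-defining precisely when both endpoints have degree exceeding $3$) is where the real work lies, and your plan defers it entirely to ``producing $|E|-1$ affinely independent path indicators.''
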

\begin{remark}
    The condition that $T$ has no internal nodes of degree 2 is without loss of generality: we show that one can collapse all nodes of degree 2 to and get a new tree such that their path polytopes are isomorphic. By minimal $\mathcal{H}$-representation we mean that no halfspace is redundant. 
\end{remark}

\begin{remark}
    The internal nodes of degree $3$ are special for the following reason. We show that the polytope $P_T$ can be expressed as toric fiber products of pyramids over second hypersimplices. The second hypersimplex $\Delta_{n,2}$ has $2n$ facets for $n>3$ while $\Delta_{3,2}$ has only 3 facets. As such, internal nodes of degree 3 (which correspond to $\Delta_{3,2}$) induce fewer facets of $P_T$, and therefore fewer halfspaces.
\end{remark}

First, we check that every vertex in $P_T$ satisfies the conditions implied by the $\mathcal{H}$-representation given in \Cref{thm:main}, so $P_T$ is included in the polytope defined by that $\mathcal{H}$-representation. Then we prove that these halfspaces are sufficient to define $P_T$, and that none of these halfspaces is redundant. To do so, we express $P_T$ as the toric fiber product of polytopes coming from subtrees of $T$, allowing us to compute its facets inductively.

\textbf{Structure of the paper.} In \Cref{sec: preliminaries} we 
review literature on polytopes and toric fiber products that is relevant for this paper. In \Cref{sec: tfp} we show path polytopes are inductively constructed as toric fiber products of pyramids over path polytopes on star trees. Then, in \Cref{sec: facets for all trees}, we describe the facets and their respective halfspaces of path polytopes of trees, concluding the proof of 
\Cref{thm:main}.

\section{Preliminaries} \label{sec: preliminaries}

This section consists of three parts. First, we explain how a tree can be decomposed into a gluing of star trees, following the ideas in \cite{boege2021reciprocal}. Second, we recall the basic notions related to polytopes and define path polytopes of trees. Third, we recall toric fiber products of polytopes.

\subsection{Gluing of trees.} \label{subsec: gluing trees} 
Given two trees $T_1 = (V_1,E_1)$, $T_2 = (V_2,E_2)$, and two edges $e_1 \in E_\leaf(T_1)$ and~$e_2 \in E_\leaf(T_2)$, the \emph{gluing} of $T_1$ and $T_2$ along $e_1,e_2$ is the new tree  $T = T_1*_{e_1,e_2}T_2$ 
obtained as the union of~$T_1$ and $T_2$ by identifying $e_1\sim e_2$. That is, if $e_i = \{ v_i, k_i\}$ with $k_i \in \Lv(T_i)$ for $i=1,2$, then $V(T) = (V_1 \sqcup V_2) \setminus \{k_1, k_2 \}$ and $E(T) = (E_1 \cup E_2 \cup \{v_1, v_2 \}) \setminus \{e_1, e_2\}$. See  \Cref{fig:example-toric fiber product} for an example.

\begin{figure}[h]
    \begin{center}
    % Graph
    \begin{subfigure}{0.24\textwidth}
        \centering
        \begin{tikzpicture}[node distance=1cm, every node/.style={circle, draw, thick, fill=black, inner sep=2pt}]
            \node (1)[label = $1$] {};
            \node (2) [above left of=1, label = $2$] {};
            \node (3) [below left of=1, label = $3$] {};
            \node (4) [right of=1, label = $4$] {};
            
            \draw[thick] (1) -- (2);
            \draw[thick] (1) -- (3);
            \draw[thick] (1) -- (4);
        \end{tikzpicture}
        \caption{Tree $T_1$.}
    \end{subfigure}
    \raisebox{1.4cm}{$\ast_{\{1,4\},\{8,5\}}$}
    \begin{subfigure}{0.24\textwidth}
        \centering
        \begin{tikzpicture}[node distance=1cm, every node/.style={circle, draw, thick, fill=black, inner sep=2pt}]
            \node (1)[label = $5$] {};
            \node (2) [above right of=1, label = $6$] {};
            \node (3) [below right of=1, label = $7$] {};
            \node (4) [left of=1, label = $8$] {};
            
            \draw[thick] (1) -- (2);
            \draw[thick] (1) -- (3);
            \draw[thick] (1) -- (4);
        \end{tikzpicture}
        \caption{Tree $T_2$.}
    \end{subfigure}
    \raisebox{1.4cm}{$\mathbf{=}$}
    % Tree
    \begin{subfigure}{0.24\textwidth}
        \centering
        \begin{tikzpicture}[every node/.style={circle, draw, thick, fill=black, inner sep=2pt}]
            \node (1)[label = $1$] {};
            \node (2) [above left of=1, label = $2$] {};
            \node (3) [below left of=1,label = $3$] {};
            \node (4) [right of=1,label = $5$] {};
            \node (5) [above right of=4,label = $6$] {};
            \node (6) [below right of=4,label = $7$] {};
            
            \draw[thick] (1) -- (2);
            \draw[thick] (1) -- (3);
            \draw[thick] (1) -- (4);
            \draw[thick] (4) -- (5);
            \draw[thick] (4) -- (6);
        \end{tikzpicture}
        \caption{Tree $T$.}
    \end{subfigure}
    \end{center}
    \caption{Gluing $T_1$ and $T_2$ along edges $\{1,4\}$ and $\{8,5\}$ to form $T$.}
    \label{fig:example-toric fiber product}
\end{figure}
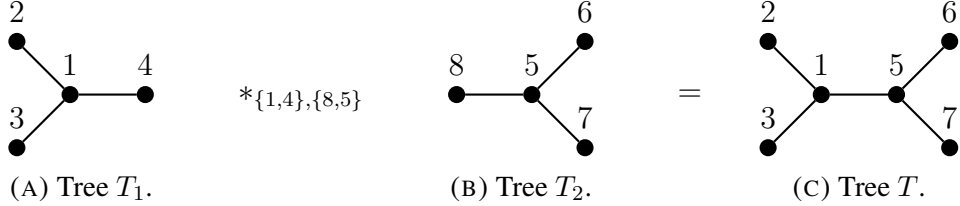

\begin{lemma}
    \label{lemma: trees decompose into stars}
   Let $T=(V,E)$ be a tree on $k$ internal nodes, with respective degrees $n_1,\ldots,n_k>2$. Then $T$ can be expressed as a gluing of $k$ star trees $S_{n_1},\ldots,S_{n_k}$. 
\end{lemma}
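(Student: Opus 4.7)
The plan is to proceed by induction on the number $k$ of internal nodes. For the base case $k=1$, the unique internal node $v \in \Int(T)$ has degree $n_1$, and every other vertex of $T$ must be a leaf adjacent to $v$, so $T = S_{n_1}$ and no gluing step is necessary.

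For the inductive step, I assume the statement for trees with $k-1$ internal nodes and consider $T$ with $k \geq 2$ internal nodes. The key structural observation is that the subgraph of $T$ induced on $\Int(T)$ is itself a tree, since it is a connected subgraph of $T$ and contains no cycles. Any tree on at least two vertices has a leaf, so there exists $v \in \Int(T)$ adjacent in $T$ to exactly one other internal node, say $u$; the remaining $\deg(v) - 1$ neighbors of $v$ in $T$ are then leaves of $T$. I use $v$ to peel off a star. After relabeling so that $n_1 = \deg(v)$, set $T_1 := S_{n_1}$, distinguish one leaf $k_1$ of $T_1$ (playing the role of $u$), and let $e_1 := \{v, k_1\}$. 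Construct $T_2$ from $T$ by deleting $v$ together with its $n_1 - 1$ leaf neighbors, then attaching a new leaf $k_2$ to $u$ via an edge $e_2 := \{u, k_2\}$. A direct unwinding of the definition of gluing then yields $T = T_1 *_{e_1, e_2} T_2$.

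To apply the inductive hypothesis I need to confirm that $T_2$ is a tree with exactly $k-1$ internal nodes, of degrees $n_2, \ldots, n_k$. Removing $v$ and its leaf neighbors preserves connectivity and acyclicity, and attaching the new leaf $k_2$ preserves them as well. The internal nodes of $T_2$ are precisely $\Int(T) \setminus \{v\}$: the only vertex whose incident edges change is $u$, which swaps the edge to $v$ for the edge to $k_2$, so its degree is preserved; since $\deg(u) > 2$ in $T$, the node $u$ remains internal in $T_2$. Every other internal node of $T$ retains its neighborhood verbatim, hence its degree. By induction, $T_2$ decomposes as a gluing of the $k-1$ star trees $S_{n_2}, \ldots, S_{n_k}$, and combining with $T = T_1 *_{e_1, e_2} T_2$ exhibits $T$ as a gluing of $k$ star trees.

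The one point requiring care is ensuring that the peel-off operation produces a $T_2$ whose internal nodes still satisfy the degree hypothesis $n_j > 2$; this is precisely why I attach the new leaf $k_2$ at $u$ instead of simply deleting the edge $\{u,v\}$, since the edge swap keeps $\deg_{T_2}(u) = \deg_T(u) > 2$. Existence of a suitable $v$ (an internal node with exactly one internal neighbor) is the other structural ingredient, and it follows from the elementary fact that the induced subgraph on internal nodes is a tree.
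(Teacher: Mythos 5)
Your proof is correct and uses the same core mechanism as the paper's: cut $T$ along an edge joining two internal nodes and cap both resulting ends with auxiliary leaves ($k_1$, $k_2$) so that the gluing operation reassembles $T$. The only difference is that the paper cuts along an arbitrary internal edge and applies strong induction to both halves, whereas you always peel off a single pendant star --- justified by your (correct) observation that the induced subgraph on $\Int(T)$ is a tree and hence has a leaf --- which lets you use ordinary induction on $k$; both arguments are valid.
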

\begin{proof}
We argue by strong induction on $k$. If $k=1$ the statement is trivial, since a tree with one internal node is a star tree. Let $k>1$ and consider two internal nodes $u_1, u_2 \in V$ such that $\{u_1, u_2\} \in E$. Let $T_1$ be the subtree of $T$ consisting of all nodes $v$ in $T$ such that the path between~$v$ and $u_1$ does not pass through $u_2$, and define $T_2$ analogously. For $i=1,2$, let $T_i' = (V_i', E_i')$ where $V_i = V(T_i) \sqcup \{v_i\}$ and $E_i' = E(T_i) \sqcup \{\{u_i, v_i\}\}$, where $v_i$ is a newly introduced leaf of $T_i$ that connects to the internal node $u_i$. Then $T = T_1' *_{\{u_1, v_1\}, \{u_2,v_2\}}T_2'$ and $\Int(T_1'), \Int(T_2) < k$, so the claim follows by induction.
\end{proof}

\subsection{Path polytopes on trees} \label{subsec: polytopes of paths}
We follow the notation from \cite{ziegler2012lectures}.

A set $P \subset  \R^d$ is a \emph{polytope} if there exist points  $\bv_1,\ldots, \bv_k\in \mathbb R^d$ such that 
\[
P=\conv(\{\bv_1,\ldots, \bv_k\})= \left\{\sum\limits_{i=1}^k \lambda_i\bv_i \mid \lambda_i \geq 0, \sum\limits_{i=1}^k \lambda_i = 1\right\}.
\]
The set $ \V(P) = \{\bv_1,\ldots, \bv_k\}$ is called a $\mathcal V$- representation of $P$.
Given a vector $\ba \in \mathbb{R}^d$ and a scalar $b \in \mathbb{R}$, a linear inequality \( \ba^\top \bx \leq b \) is \textit{valid} for \( P \) if it is satisfied for all points \( \bx \in P \). A \textit{face} of \( P \) is any set of the form
\[
F = P \cap \{\bx \in \mathbb{R}^d \mid \ba^\top \bx = b\},
\]
where \( \ba^\top \bx \leq b \) is a valid inequality for \( P \). The \textit{dimension} of a face $F$ is the dimension of its affine hull $\operatorname{aff}(F)$, i.e., the smallest affine space containing $F$. We denote \( \dim(F) = \dim(\operatorname{aff}(F)) \). Similarly, the dimension of a polytope $P$ is $\dim(\operatorname{aff}(P))$.  A $d$-dimensional polytope is called a $d$-polytope.
Given a face $F = P \cap \{x \in \mathbb{R}^d \mid \ba^\top \bx = b\}$, the equation $\ba^\top \bx = b$ is called the \textit{supporting affine space} of $F$. A face $F$ of dimension $\dim(P)-1$ is called a \textit{facet} of $P$, and a corresponding valid inequality \( \ba^\top \bx \leq b \) is a \textit{halfspace} of~$P$. The set of affine spaces that contain~$P$ together with a set of halfspaces which describe the facets of $P$ is called an $\mathcal{H}$-representation of~$P$.  Given two positive integers $k, n$, the \emph{$(n,k)$-hypersimplex} is
\begin{equation*}\label{eq:hypersimplex}
    \Delta_{n,k} = \left\{ (x_1, \dots, x_n) \in \R^n \mid \sum_{i=1}^n x_i = k, \; 0 \leq x_1,\ldots,x_n \leq 1  \right\}.
\end{equation*}
When $k=1$, $\Delta_{n,1}$ is the \emph{standard simplex} $\Delta_n$. 

Consider a tree $T = (V, E)$. Recall that for two vertices $u,v\in V$, $u \leftrightarrow v$ denotes the set of edges in the path that connects $u$ and $v$, and the \textit{edge indicator vector} $\bc_T^{ u \leftrightarrow v} = (c_e)_{e \in E} \in \R^{E}$ has  $c_e = 1$ if~$e \in u \leftrightarrow v$ and $c_e = 0$ otherwise. For simplicity, we will use $\bc^{u \leftrightarrow v}$ instead of $\bc_T^{u \leftrightarrow v}$ if there is no risk of ambiguity. The \emph{path polytope} of the tree $T$ is
\[
P_{T} = \operatorname{\conv}(\{\bc_T^{i \leftrightarrow j} \mid i,j \in \Lv(T)\}) \subset \R^{E}.
\]

The next important notion for us is \textit{free join} of two polytopes, which refers to their union when they lie in \textit{skew} affine spaces, i.e., affine spaces that are neither parallel or intersecting. More precisely, let $P, Q$ be two polytopes such that $\dim (\conv(P \cup Q)) = \dim(P) + \dim(Q) + 1$. Then $\conv(P \cup Q)$ is called the \emph{free join} of $P$ and $Q$ and is denoted $P \ojoin Q$.
We are interested in taking the convex hull of path polytopes of trees with the origin. The next result shows that it is a free join, see \Cref{fig:plots_polytopes}.
\begin{proposition}
\label{prop: big hyperplane}
 Given a tree $T$ with at least two edges, the polytope $P_T$ lives in the hyperplane defined by $\sum_{e \in E_{\leaf}(T)} x_{e} = 2$. Consequently, $\conv({P}_{T} \cup \{\mathbf{0}\})$= $P_T \ojoin \{\mathbf{0}\}$.
\end{proposition}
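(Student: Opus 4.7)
The plan is to prove both claims essentially by inspection, with the bulk of the work going into the hyperplane equation. First I would verify the equation on the vertices of $P_T$. Fix two distinct leaves $i,j\in\Lv(T)$ and write the path as a sequence of edges $\{v_0,v_1\},\{v_1,v_2\},\ldots,\{v_{k-1},v_k\}$ with $v_0=i$ and $v_k=j$. The key observation is that any intermediate vertex $v_\ell$ with $1\le\ell\le k-1$ has at least two neighbors in $T$ (namely $v_{\ell-1}$ and $v_{\ell+1}$), so $\deg(v_\ell)\ge 2$ and $v_\ell\notin\Lv(T)$. Consequently, every interior edge $\{v_\ell,v_{\ell+1}\}$ with $1\le \ell\le k-2$ has both endpoints in $\Int(T)$, so it is not in $E_\leaf(T)$. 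The only edges of the path that lie in $E_\leaf(T)$ are the first edge $\{i,v_1\}$ and the last edge $\{v_{k-1},j\}$.

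To conclude that the sum equals $2$ rather than $1$, these two edges must be distinct, i.e.\ we need $k\ge 2$. This is where the hypothesis that $T$ has at least two edges enters: in a tree with two or more edges no two leaves are adjacent, since an edge between leaves would form an isolated component of $T$. Hence $\sum_{e\in E_\leaf(T)}\bc^{i\leftrightarrow j}_e=2$ for every pair of distinct leaves $i,j$, and since this is a linear equation that holds at every vertex of $P_T$, it holds on all of $P_T=\conv\{\bc^{i\leftrightarrow j}\}$ by convexity.

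For the free-join statement, the plan is a one-line dimension count. The origin satisfies $\sum_{e\in E_\leaf(T)}0=0\ne 2$, so $\mathbf{0}\notin\aff(P_T)$. Therefore $\aff(P_T\cup\{\mathbf{0}\})$ strictly contains $\aff(P_T)$ and has dimension exactly $\dim(P_T)+1=\dim(P_T)+\dim(\{\mathbf{0}\})+1$, which is precisely the condition that $P_T$ and $\{\mathbf{0}\}$ lie in skew affine spaces. Hence $\conv(P_T\cup\{\mathbf{0}\})=P_T\ojoin\{\mathbf{0}\}$.

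I do not foresee any real obstacle. The only subtlety worth emphasizing in the writeup is the use of the \textquotedblleft at least two edges\textquotedblright{} hypothesis to rule out the degenerate case in which the first and last edges of a leaf-to-leaf path coincide; without it, a single-edge tree would give $\sum x_e=1$ instead of $2$.
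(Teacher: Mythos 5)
Your proof is correct and follows the same route as the paper: check the hyperplane equation on the vertices $\bc^{i\leftrightarrow j}$, extend by convexity, and deduce the free join from the fact that the origin misses the hyperplane. The paper simply asserts that each vertex satisfies $\sum_{e\in E_\leaf(T)}x_e=2$, whereas you supply the (correct) justification that exactly the first and last edges of a leaf-to-leaf path lie in $E_\leaf(T)$ and that these are distinct because no two leaves are adjacent in a tree with at least two edges.
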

\begin{proof}

Every vertex $\bc^{i \leftrightarrow j}$ of $P_T$ satisfies the equation $\sum_{e \in E_{\leaf}(T)} x_{e} = 2$.  Hence, $P_T$ lives in this hyperplane that does not contain the origin. This implies that $\conv(P_T \cup \{\mathbf{0}\})$ is the free join $P_T \ojoin \{\mathbf{0}\}$.
\end{proof}

\begin{figure}[ht!]
    \centering
    \begin{subfigure}{0.3\textwidth}
    \centering
    \vfill
        \begin{tikzpicture}[node distance=1cm, every node/.style={circle, draw, thick, fill=black, inner sep=2pt}]
            \node (1) []{};
            \node (2) [above right of=1] {};
            \node (3) [below right of=1] {};
            \node (4) [left of=1] {};
            \node[below of=3, draw=none, fill=none, circle=none] (n0) {};
            
            \draw[thick] (1) -- (2);
            \draw[thick] (1) -- (3);
            \draw[thick] (1) -- (4);
        \end{tikzpicture}
        \caption{Star tree $S_3$.}
        \vfill
    \end{subfigure}
    \begin{subfigure}{0.3\textwidth}
    \centering
    \includegraphics[width=\textwidth]{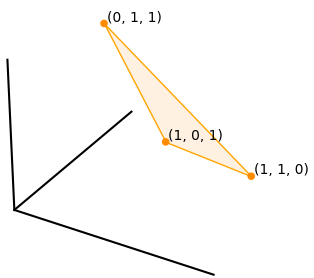}
    \caption{Path polytope $P_{S_3}$.}
    \end{subfigure}
  \qquad
    \begin{subfigure}{0.3\textwidth}
    \centering
    \includegraphics[width=\textwidth]{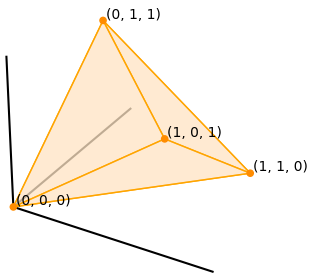}
    \caption{Free join $P_{S_3} \ojoin \{\mathbf{0}\}$.}
    \end{subfigure}
    \caption{Path polytope of a star tree and its free join with the origin.
    }
    \label{fig:plots_polytopes}
\end{figure}

The following result describes the faces of the free join of two polytopes, which is needed to compute facets of path polytopes inductively.

\begin{lemma}[{\cite[Proposition 2.1]{mcmullen1975}}]\label{lemma:facets-ojoin}
The faces of \(P \mathop{\ojoin} Q\) are precisely the sets of the form \(F \mathop{\ojoin} G\), where \(F\) is a face of \(P\) and \(G\) is a face of \(Q\) (including \(F = \emptyset\) or \(P\), and \(G = \emptyset\) or \(Q\)).
\end{lemma}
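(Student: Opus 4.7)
The plan is to reduce to a normal form dictated by the skewness of $\aff(P)$ and $\aff(Q)$, then characterize faces by evaluating linear functionals on an explicit parametrization. Set $n = \dim(P)$ and $m = \dim(Q)$. Because $\dim(\aff(P \cup Q)) = n + m + 1$ and the two affine hulls are skew, we may identify $\aff(P\cup Q)$ affinely with $\R^{n+m+1}$ so that
\[
P \subset \R^n \times \{0\}^m \times \{0\}, \qquad Q \subset \{0\}^n \times \R^m \times \{1\}.
\]
Expanding a generic convex combination of points from $P$ and from $Q$ and collecting the weights on the $Q$-side into a single scalar $\lambda$, one obtains the parametrization
\[
P \ojoin Q = \{\,((1-\lambda)p,\; \lambda q,\; \lambda) : p \in P,\; q \in Q,\; \lambda \in [0,1]\,\},
\]
with the convention that $p$ is irrelevant when $\lambda = 1$ and $q$ is irrelevant when $\lambda = 0$. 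Skewness is what makes the triple $(p,q,\lambda)$ unique for $\lambda \in (0,1)$, which is the only property of the free join we will need.

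For the forward inclusion, let $F'$ be a face of $P \ojoin Q$, so $F'$ is the maximizing set of some linear functional $(\ba,\bb,c) \in \R^{n+m+1}$. Evaluating on the parametrization gives
\[
(\ba,\bb,c)\cdot\bigl((1-\lambda)p,\lambda q,\lambda\bigr) = (1-\lambda)\,\ba^\top p \;+\; \lambda\,(\bb^\top q + c).
\]
Let $M_P := \max_{p \in P} \ba^\top p$, attained on a face $F \subseteq P$, and $M_Q := \max_{q \in Q}(\bb^\top q + c)$, attained on a face $G \subseteq Q$. The overall maximum is $\max_{\lambda \in [0,1]}\bigl[(1-\lambda)M_P + \lambda M_Q\bigr]$, and I split into three cases. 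If $M_P > M_Q$, only $\lambda = 0$ is optimal, so $F' = F$, which is $F \ojoin \emptyset$ under the stated convention. If $M_P < M_Q$, only $\lambda = 1$ is optimal, so $F' = G = \emptyset \ojoin G$. If $M_P = M_Q$, every $\lambda \in [0,1]$ is optimal, and $F' = F \ojoin G$ exactly because the factors on each side are independently optimized. This produces every face as a free join of faces.

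For the reverse, given faces $F \subseteq P$ and $G \subseteq Q$ with supporting equations $\ba^\top p = b_P$ on $\aff(P)$ and $\bb^\top q = b_Q$ on $\aff(Q)$, the functional $(\ba, \bb, b_P - b_Q)$ has $M_P = b_P = M_Q$, so by the equal-case of the above the maximizer on $P \ojoin Q$ is exactly $F \ojoin G$. The improper faces are handled by taking $\ba = 0$ or $\bb = 0$ (yielding $F = P$ or $G = Q$) and by shifting $c$ to force $M_P > M_Q$ or $M_P < M_Q$ (yielding the empty-face slot).

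The main obstacle is the normal-form step: justifying that, up to affine equivalence, one may take $P$ and $Q$ to sit in the specific complementary affine subspaces above, and verifying the uniqueness of the decomposition $((1-\lambda)p, \lambda q, \lambda)$ in the relative interior. This is exactly where the skewness hypothesis enters; once it is in hand, everything else is a one-line computation of a linear functional followed by a clean case split on which endpoint of $[0,1]$ attains the maximum.
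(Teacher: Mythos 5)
The paper does not prove this lemma at all: it is imported verbatim as Proposition~2.1 of McMullen's 1975 paper, so there is no internal proof to compare against. Your argument is a correct, self-contained proof of the cited fact, and it is essentially the standard one: put the join in coordinate normal form, parametrize $P \ojoin Q$ as $\{((1-\lambda)p,\lambda q,\lambda)\}$, and observe that a linear functional decomposes as $(1-\lambda)\ba^\top p + \lambda(\bb^\top q + c)$, so its maximizer set is determined by independently maximizing over $P$ and over $Q$ and then over the endpoint(s) of $[0,1]$; the three cases $M_P>M_Q$, $M_P<M_Q$, $M_P=M_Q$ account exactly for the slots $F\ojoin\emptyset$, $\emptyset\ojoin G$, $F\ojoin G$, and the converse direction is the observation that $(\ba,\bb,b_P-b_Q)$ equalizes the two maxima. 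The one step you flag but do not carry out, the normal form, does follow cleanly from the definition in the paper: $\dim(\conv(P\cup Q))=\dim P+\dim Q+1$ forces $\aff(P)\cap\aff(Q)=\emptyset$ and the direction spaces $U,V$ of the two affine hulls to satisfy $U\cap V=\{0\}$ (by the dimension formula for the affine span of two flats), so after translating a point of $\aff(P)$ to the origin one has the direct sum $U\oplus V\oplus\R(q_0-p_0)$ and hence your coordinates; it would be worth writing these two lines out, and likewise noting that $\aff(F)\subseteq\aff(P)$ and $\aff(G)\subseteq\aff(Q)$ inherit this skewness, so the maximizer set in the equal case really is the free join $F\ojoin G$ and not merely $\conv(F\cup G)$. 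With those small additions the proof is complete; since all faces of a polytope are exposed (and the paper's definition of face is via valid inequalities anyway), the linear-functional characterization you use covers every face, including the improper ones.
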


\subsection{Toric fiber products of polytopes} The notion of toric fiber product was introduced by Sullivant in \cite{sullivant2006toric} on toric ideals. We follow the notation from \cite{hollering2024hyperplane}, which adapts this concept to polytopes.

An \emph{integral} polytope is a polytope whose vertices all have integer coordinates. Given an integral polytope $P \subset \R^n$, a projection $\pi: P \to \R^m$ is called \emph{integral} if $\pi(P)$ is an integral polytope.
Given integral polytopes $P_1$ and $P_2$ and integral projections~$\pi_1: P_1 \rightarrow  Q$ and $\pi_2: P_2 \rightarrow Q$ for some polytope $Q = \pi_1(P_1) = \pi_2(P_2)$, the \textit{toric fiber product} of $P_1$ and $P_2$ is 
\[
P_1 \times_Q P_2 = \conv(\{(\bx,\by) \in \V(P_1) \times \V(P_2) \mid \pi_1(\bx)=\pi_2(\by) \}).
\]
This gives the toric fiber product polytope $P_1 \times_Q P_2$ by its $\mathcal{V}$-representation. We  can retrieve the facets $P_1 \times_Q P_2$ when $Q$ is a standard simplex using the following result.

\begin{lemma}[{\cite[Lemma 3.2]{dinu2021gorenstein}}]
    \label{lemma:facets-tfp}
    Let \(P_1\) and \(P_2\) be two polytopes. Let \(\pi_i : P_i \to \mathbb{R}^n\),  for i=1,2, be integral projections such that \(\pi_1(P_1) = \pi_2(P_2) = \Delta_{n}\). Then all facets of the toric fiber product \(P_1 \times_{\Delta_{n}} P_2\) are of the form \(F_1 \times_{\Delta_{n}} P_2\) or \(P_1 \times_{\Delta_{n}} F_2\), where \(F_i\) is a facet of \(P_i\).
\end{lemma}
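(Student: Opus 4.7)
My plan is to analyze the facets of $P := P_1 \times_{\Delta_n} P_2$ through the natural projection $\pi : P \to \Delta_n$ induced by $\pi_1, \pi_2$. For each vertex $e_j$ of $\Delta_n$, write $F_i^{(j)} := \pi_i^{-1}(e_j) \cap P_i$; this is a face of $P_i$, and the fiber of $\pi$ over $e_j$ is $F_1^{(j)} \times F_2^{(j)}$. Under the standing assumption that vertices of $P_i$ map to vertices of $\Delta_n$ (so that the $\mathcal{V}$-representation of $P$ is literally given by pairs of fiberwise vertices), one obtains $P = \conv \bigcup_{j=1}^n (F_1^{(j)} \times F_2^{(j)})$ and $\dim P = \dim P_1 + \dim P_2 - (n-1)$.

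For the easy direction, suppose $F_1$ is a facet of $P_1$ cut out by the halfspace $a_1^\top x \leq b_1$. Extending to $P$ by taking zero coefficients on $y$ gives a valid inequality whose equality locus in $P$ is $\{(x,y)\in P : x\in F_1\} = F_1 \times_{\Delta_n} P_2$. The dimension count $\dim(F_1 \times_{\Delta_n} P_2) = \dim F_1 + \dim P_2 - (n-1) = \dim P - 1$ identifies this as a facet of $P$. The same argument handles facets coming from $P_2$.

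For the converse, let $F$ be a facet of $P$ with defining inequality $a_1^\top x + a_2^\top y \leq b$. The key observation is a shift symmetry: since $\pi_1(x) = \pi_2(y)$ on $P$, for any $c \in \R^n$ the replacement $a_1 \mapsto a_1 + \pi_1^\top c$ and $a_2 \mapsto a_2 - \pi_2^\top c$ preserves the restriction of $a_1^\top x + a_2^\top y$ to $P$, hence preserves $F$. I choose $c_j := \max_{y \in F_2^{(j)}} a_2^\top y$ for each $j$; after the shift, the new $\tilde a_2$ satisfies $\max_{F_2^{(j)}} \tilde a_2^\top y = 0$ for every $j$, which together with $P_2 = \conv \bigcup_j F_2^{(j)}$ gives $\tilde a_2^\top y \leq 0$ on all of $P_2$, with equality on a face $F_2^\star \subseteq P_2$. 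Combined with $\tilde a_1$, this yields the decoupled bound $\tilde a_1^\top x + \tilde a_2^\top y \leq \tilde a_1^\top x \leq \max_{x \in P_1} \tilde a_1^\top x$ on $P$, whose equality locus equals $F_1^\star \times_{\Delta_n} F_2^\star$ where $F_1^\star$ is the face of $P_1$ maximizing $\tilde a_1$.

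To finish, observe that a toric fiber product of two proper faces satisfies $\dim(F_1^\star \times_{\Delta_n} F_2^\star) \leq \dim F_1^\star + \dim F_2^\star - (n-1) \leq \dim P - 2$, so the facet condition $\dim F = \dim P - 1$ forces one of $F_1^\star, F_2^\star$ to coincide with the ambient polytope, yielding either $F = F_1^\star \times_{\Delta_n} P_2$ with $F_1^\star$ a facet of $P_1$, or $F = P_1 \times_{\Delta_n} F_2^\star$ with $F_2^\star$ a facet of $P_2$. The main obstacle is this last dimension step: one must justify the inequality above, which requires checking that after the shift both projections $\pi_i(F_i^\star)$ still cover all of $\Delta_n$ so that the fiber-product dimension formula applies. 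This can be arranged by observing that the shift leaves invariant the set of fibers on which $a_1^\top x + a_2^\top y$ achieves its maximum, and since $F$ is a facet (not confined to any single fiber) this maximizing set must meet every vertex $e_j$ of $\Delta_n$.
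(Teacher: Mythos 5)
The paper offers no proof of this lemma---it is quoted verbatim from the cited reference---so you are supplying an argument where the authors only cite one. Your central mechanism is sound and does prove the stated implication: normalize the facet functional $a_1^\top x + a_2^\top y$ by the shift $a_2 \mapsto a_2 - \pi_2^\top c$ with $c_j = \max_{F_2^{(j)}} a_2^\top y$, so that it decouples into $\tilde a_1^\top x \le b_1$ on $P_1$ and $\tilde a_2^\top y \le 0$ on $P_2$ with equality locus $F_1^\star \times_{\Delta_n} F_2^\star$, and then force one factor to be the whole polytope by a dimension count. Two of your supporting claims are false as stated, however. First, your ``easy direction'' is wrong: $F_1 \times_{\Delta_n} P_2$ need \emph{not} be a facet of $P$ when $F_1$ is a facet of $P_1$, because when $\pi_1(F_1)$ is a proper face of $\Delta_n$ only the inequality $\dim(F_1 \times_{\Delta_n} P_2) \le \dim F_1 + \dim P_2 - (n-1)$ is automatic, and it can be strict; the paper's Theorem 4.1 exhibits exactly this at degree-$3$ nodes. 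Fortunately the lemma asserts only the converse inclusion, so this costs you nothing. Second, your closing justification---that a facet $F$ ``must meet every vertex $e_j$ of $\Delta_n$''---is also false: the facet $F_T^{\{u_1,u_2\}}$ in the paper lies entirely over the proper face $\conv\{(1,0,0),(0,0,1)\}$ of $\Delta_3$. What saves the argument is that you never need $\pi_1(F_1^\star)$ to cover $\Delta_n$: your own choice of $c_j$ already forces $\max_{F_2^{(j)}} \tilde a_2 = 0$ for every $j$, hence $F_2^\star$ meets every fiber and $\pi_2(F_2^\star) = \Delta_n$, and surjectivity of \emph{one} of the two restricted projections is enough to give the constraint $\pi_1(x)=\pi_2(y)$ full rank $n-1$ on $\aff(F_1^\star)\times\aff(F_2^\star)$, which is all that the inequality $\dim(F_1^\star \times_{\Delta_n} F_2^\star) \le \dim F_1^\star + \dim F_2^\star - (n-1)$ requires. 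With that one-line repair---and granting the dimension formula $\dim P = \dim P_1 + \dim P_2 - (n-1)$, which you assert and the paper also uses without proof, and which, like your decomposition $P = \conv\bigcup_j (F_1^{(j)}\times F_2^{(j)})$, genuinely requires your standing assumption that vertices of $P_i$ map to vertices of $\Delta_n$ (stronger than the paper's definition of integral projection, but satisfied by the gluing projections)---your proof is complete and correct.
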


\section{Path polytopes of trees via toric fiber products} \label{sec: tfp}

We use toric fiber products to construct the path polytope of a tree using path polytopes of its subtrees. More precisely, just as a $T$ can be formed using gluing on stars $S_{n_1},\ldots,S_{n_k}$, we show that the polytope $P_T$ can be obtained by toric fiber products on $P_{S_{n_1}}\ojoin\{\mathbf{0}\},\ldots,P_{S_{n_k}}\ojoin\{\mathbf{0}\}$. This is done by defining integral projections $\pi_1$ and $\pi_2$ to align corresponding indicator vectors. The resulting matched vectors describe connected paths in the new tree, forming a larger indicator vector for the glued structure. However, some paths between leaves of one subtree need not be glued with paths from different subtrees, requiring the inclusion of the origin in the toric fiber product.

Consider two trees $T_1$ and $T_2$, and two edges $e_1 \in E_\leaf(T_1)$ and $e_2 \in E_\leaf(T_2)$. We call \emph{gluing integral projections for $(T_1, T_2, e_1, e_2)$} to a pair of integral projections $\pi_i : P_{T_i} \ojoin \{ \mathbf{0}\} \to \Delta_{3}$ $(i = 1,2)$ such that $\pi_1(\mathbf{0}) = (0,0,1)$, $ \pi_2(\mathbf{0}) = (1,0,0)$,

\begin{center}
    $\pi_1(\bc_{T_1}^{i \leftrightarrow j}) = \begin{cases}
        (1,0,0) & \text{ if } e_1 \notin E(i \leftrightarrow j) \\
        (0,1,0) & \text{ if } e_1 \in E(i \leftrightarrow j),
    \end{cases}
    $  \quad $\pi_2(\bc_{T_2}^{ i \leftrightarrow j}) = \begin{cases}
        (0,1,0) & \text{ if } e_2 \in E(i \leftrightarrow j) \\
        (0,0,1) & \text{ if }  e_2 \notin E(i \leftrightarrow j).
    \end{cases}$
\end{center}

\begin{example} Consider two trees $T_1$ and $T_2$, and two edges $e_1 \in E_\leaf(T_1)$ and $e_2 \in E_\leaf(T_2)$.  Let $\{\bb_{e}^{(i)} \mid e \in E(T_i) \}$ denote the standard basis of $\R^{E(T_i)}$ for $i=1,2$. Let $\pi_i: \R^{E(T_i)} \to \R^3$ for $i=1,2$ be defined by
    \[
    \pi_1(\bb_e^{(1)}) = \begin{pmatrix}
        \frac{1}{2}\one_{E_\leaf(T_1)}(e) - \one_{\{e_1\}}(e) \\
        \one_{\{e_1\}}(e) \\
        -\frac{1}{2}\one_{E_\leaf(T_1)}(e) + 1
    \end{pmatrix}, \quad \pi_2(\bb_e^{(2)}) = \begin{pmatrix}
    -\frac{1}{2}\one_{E_\leaf(T_2)}(e) + 1\\
        \one_{\{e_2\}}(e) \\
        \frac{1}{2}\one_{E_\leaf(T_2)}(e) - \one_{\{e_2\}}(e)
\end{pmatrix},
\]
where $\mathbbm{1}_S(x)$ denotes the characteristic function for the set $S$. Then $\pi_1,\pi_2$ is a pair of gluing integral projections.
\end{example}

\begin{example}
\label{ex: tfp on two star trees}
 Let $T_1=S_3$ with center $1$ and leaves $2,3,4$, let $T_2=S_3$ with center 5 and leaves $6,7,8$, and let $T=T_1 \ast_{\{1,4\},\{8,5\}} T_2$, as in \Cref{fig:example-toric fiber product}. Let $\pi_1, \pi_2$ be gluing integral projections and let $Q=({P}_{T_1} \ojoin \{\mathbf{0}\})\times_{\Delta_{3}} ({P}_{T_2} \ojoin \{\mathbf{0}\})$. The tables below show the $\V$-representation for each of the polytopes involved, revealing that $P_T \cong Q$ (by mapping the edges $\{1,4\}, \{8,5\}$ to $\{1,5\}$ and leaving the rest fixed). We use colors to denote which vectors are matched together by the gluing integral projections, using the color code given by $\Delta_{3}=\conv(\textcolor{darkgreen}{(1,0,0)},\textcolor{darkred}{(0,1,0)},\textcolor{darkblue}{(0,0,1)})$.

\begin{flushleft}
    \hspace{0.2cm}
    \begin{minipage}{0.4\textwidth}
    \medskip
    \centering
    \renewcommand{\arraystretch}{1.2}
    \setlength{\arrayrulewidth}{0.4mm}
    \begin{tabular}{c|cccc}
        $\V({P}_{T_1} \ojoin \{\mathbf{0}\})$ & $\textcolor{darkgreen}{\bc^{2 \leftrightarrow 3}}$ & $\textcolor{darkred}{\bc^{2 \leftrightarrow 4}}$ & $\textcolor{darkred}{\bc^{3 \leftrightarrow 4}}$ & \textcolor{darkblue}{$\mathbf{0}$} \\
        \hline
        \{1,2\} & 1 & 1 & 0 & 0 \\
        \{1,3\} & 1 & 0 & 1 & 0 \\
        \{1,4\} & 0 & 1 & 1 & 0
    \end{tabular}
    
\bigskip

    \centering
    \renewcommand{\arraystretch}{1.2}
    \setlength{\arrayrulewidth}{0.4mm}
    \begin{tabular}{c|cccc}
        $\V({P}_{T_2} \ojoin \{\mathbf{0}\})$ & $\textcolor{darkblue}{\bc^{6 \leftrightarrow 7}}$ & $\textcolor{darkred}{\bc^{8 \leftrightarrow 6}}$ & $\textcolor{darkred}{\bc^{8 \leftrightarrow 7}}$ & $\textcolor{darkgreen}{\mathbf{0}}$ \\
        \hline
        \{5,8\} & 0 & 1 & 1 & 0 \\
        \{5,6\} & 1 & 1 & 0 & 0 \\
        \{5,7\} & 1 & 0 & 1 & 0
    \end{tabular}
    \end{minipage}
    \hspace{0.05\textwidth}
    \begin{minipage}{0.4\textwidth}
        \renewcommand{\arraystretch}{1.5}

        \setlength{\arrayrulewidth}{0.4mm}
        \centering
        \begin{tabular}{c|cccccc}
     $\mathcal{V}(Q)$ & $\textcolor{darkgreen}{\bc^{2 \leftrightarrow 3}}$ & $\textcolor{darkblue}{\bc^{6 \leftrightarrow 7}}$ & $\textcolor{darkred}{\bc^{2 \leftrightarrow 6}}$ & $\textcolor{darkred}{\bc^{2 \leftrightarrow 7}}$ & $\textcolor{darkred}{\bc^{3 \leftrightarrow 6}}$ & $\textcolor{darkred}{\bc^{3 \leftrightarrow 7 }}$ \\
      \hline
        \{1,2\} & 1 & 0 & 1 & 1 & 0 & 0 \\
        \{1,3\} & 1 & 0 & 0 & 0 & 1 & 1 \\
        \{1,4\} & 0 & 0 & 1 & 1 & 1 & 1 \\
        \{5,8\} & 0 & 0 & 1 & 1 & 1 & 1 \\
        \{5,6\} & 0 &  1 & 1 & 0 & 1 & 0\\
        \{5,7\} & 0 & 1&  0 & 1 & 0 & 1
    \end{tabular}
    \end{minipage}
\end{flushleft}

\begin{center}
    \renewcommand{\arraystretch}{1.5}
    \setlength{\arrayrulewidth}{0.4mm}
   \begin{tabular}{c|cccccc}
     $\mathcal{V}(P_T)$ & $\textcolor{darkgreen}{\bc^{2 \leftrightarrow 3}}$ & $\textcolor{darkblue}{\bc^{6 \leftrightarrow 7}}$ & $\textcolor{darkred}{\bc^{2 \leftrightarrow 6}}$ & $\textcolor{darkred}{\bc^{2 \leftrightarrow 7}}$ & $\textcolor{darkred}{\bc^{3 \leftrightarrow 6}}$ & $\textcolor{darkred}{\bc^{3 \leftrightarrow 7 }}$ \\
      \hline
        \{1,2\} & 1 & 0 & 1 & 1 & 0 & 0 \\
        \{1,3\} & 1 & 0 & 0 & 0 & 1 & 1 \\
        \{1,5\} & 0 & 0 & 1 & 1 & 1 & 1 \\
        \{5,6\} & 0 &  1 & 1 & 0 & 1 & 0\\
        \{5,7\} & 0 & 1&  0 & 1 & 0 & 1
    \end{tabular}
\end{center}
\end{example}

\begin{theorem}
\label{thm:gluing-tfp}

Consider a gluing of two trees $T = T_1 \ast_{e_1, e_2} T_2$. Let $\pi_i : P_{T_i} \ojoin \{ \mathbf{0}\} \to \Delta_{3}$ $(i = 1,2)$ be a pair of gluing integral projections for $(T_1, T_2, e_1, e_2)$. Then,
\[
P_T \cong ({P}_{T_1} \ojoin \{\mathbf{0}\})\times_{\Delta_{3}} ({P}_{T_2} \ojoin \{\mathbf{0}\}).
\]
  \end{theorem}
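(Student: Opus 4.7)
My plan is to exhibit an explicit linear map that gives an affine isomorphism between $Q:=(P_{T_1}\ojoin\{\mathbf{0}\})\times_{\Delta_{3}}(P_{T_2}\ojoin\{\mathbf{0}\})$ and $P_T$ by matching vertices. Write $e_i=\{v_i,k_i\}$ with $k_i\in\Lv(T_i)$ so that, under the gluing, $\Lv(T)=(\Lv(T_1)\setminus\{k_1\})\sqcup(\Lv(T_2)\setminus\{k_2\})$ and $E(T)=(E_1\setminus\{e_1\})\sqcup(E_2\setminus\{e_2\})\sqcup\{\{v_1,v_2\}\}$. The argument breaks into three parts: classify the vertices of $Q$; define a linear quotient map $\phi$ collapsing the $e_1$- and $e_2$-coordinates into a single coordinate indexed by $\{v_1,v_2\}$; and match the resulting vertices with the edge-indicator vectors of $P_T$.

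For the first part, I would enumerate the pairs $(\bx,\by)\in\V(P_{T_1}\ojoin\{\mathbf{0}\})\times\V(P_{T_2}\ojoin\{\mathbf{0}\})$ with $\pi_1(\bx)=\pi_2(\by)$. Since $\pi_1$ and $\pi_2$ take only the three values $(1,0,0),(0,1,0),(0,0,1)$ on such vertices, a short case check (noting in particular that $\pi_1(\mathbf{0})=(0,0,1)\neq(1,0,0)=\pi_2(\mathbf{0})$, so $(\mathbf{0},\mathbf{0})$ is not matched) shows that the matched pairs form exactly three families: $(\bc_{T_1}^{i\leftrightarrow j},\mathbf{0})$ for $i,j\in\Lv(T_1)\setminus\{k_1\}$, matching at $(1,0,0)$; $(\mathbf{0},\bc_{T_2}^{i\leftrightarrow j})$ for $i,j\in\Lv(T_2)\setminus\{k_2\}$, matching at $(0,0,1)$; and $(\bc_{T_1}^{i\leftrightarrow k_1},\bc_{T_2}^{k_2\leftrightarrow j})$ for $i\in\Lv(T_1)\setminus\{k_1\}$ and $j\in\Lv(T_2)\setminus\{k_2\}$, matching at $(0,1,0)$.

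For the second part, define $\phi:\R^{E_1}\oplus\R^{E_2}\to\R^{E(T)}$ to be the identity on $E_i\setminus\{e_i\}$ and to send both the $e_1$- and $e_2$-coordinates to a single coordinate indexed by the new edge $\{v_1,v_2\}$. The classification above shows that at each vertex of $Q$ the $e_1$- and $e_2$-coordinates agree (both zero in the first two families, both one in the third), hence the same holds throughout $Q$ by convexity. Therefore $\phi|_Q$ is a well-defined linear map, and is injective because the only coordinates that $\phi$ collapses agree on $Q$. For the third part, the three families correspond bijectively to the three types of paths in $T$ between distinct leaves: paths staying inside $T_1$ avoiding $e_1$, paths staying inside $T_2$ avoiding $e_2$, and paths that cross the glued edge $\{v_1,v_2\}$ and decompose as $(i\leftrightarrow v_1)\sqcup\{\{v_1,v_2\}\}\sqcup(v_2\leftrightarrow j)$. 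Under $\phi$, the three families pair up with $\bc_T^{i\leftrightarrow j}$ in each case, so $\phi$ sends $\V(Q)$ bijectively onto $\V(P_T)$, yielding the claimed isomorphism.

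The main difficulty is bookkeeping: tracking how the gluing re-labels leaves and edges and, above all, convincing oneself in the first part that the three listed families are \emph{all} matched vertex pairs, without spurious matches between the origin on one side and some nonzero indicator on the other. Once this case analysis is carried out cleanly, the identification of each family with a path type in $T$ is immediate from the definition of the gluing, and the linearity and injectivity of $\phi$ complete the proof.
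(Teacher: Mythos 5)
Your proposal is correct and follows essentially the same route as the paper: classify the matched vertex pairs of the toric fiber product into the three families indexed by the vertices of $\Delta_3$, collapse the $e_1$- and $e_2$-coordinates into the single new edge coordinate, and check the resulting bijection with $\V(P_T)$. The only cosmetic point is that your collapsing map should be taken to send each of $\ba_{e_1},\ba_{e_2}$ to $\tfrac12\bb_{\{v_1,v_2\}}$ (as the paper does) or, equivalently, to record their common value on $Q$, so that the third family lands on $0/1$ indicator vectors rather than picking up a coordinate equal to $2$.
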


\begin{proof}
Let $Q = ({P}_{T_1} \ojoin \{\mathbf{0}\})\times_{\Delta_{3}} ({P}_{T_2} \ojoin \{\mathbf{0}\})$. It suffices to construct an affine transformation~$\phi$ that gives a bijection between the vertices of $Q$ and the vertices of $P_T$. Let $e_1 = \{u_1, k_1\}, e_2 = \{u_2, k_2\}$ where $k_i \in \Lv(T_i)$ for $i=1,2$. 
Let $\{\ba_{e} \mid e \in E(T_1) \cup E(T_2)\}$ be the standard basis of $\R^{E(T_1)} \times \R^{E(T_2)} \cong \R^{E(T_1) \cup E(T_2)}$, and let $\{\bb_e \mid e \in E(T) \}$ be the standard basis of $\R^{E(T)}$. Consider the affine map given by 
$\phi(\ba_{e_1}) = \phi(\ba_{e_2})  = \frac{1}{2}\bb_{\{u_1, u_2 \}}$ and
$\phi(\ba_e) = \bb_e$ if $e \neq e_1, e_2$.

The vertices of $Q$ can be divided in three classes, one for each vertex of $\Delta_{3}$.
First, given two distinct leaves $i, j \in \Lv(T_1) \setminus \{ k_1\}$, the vertex $(\bc_{T_1}^{i \leftrightarrow j}, \mathbf{0}) \in \V(Q)$ is mapped to $\bc_T^{i\leftrightarrow j} \in \V(P_T)$ under~$\phi$. 
Second, given two distinct leaves $i, j \in \Lv(T_2) \setminus \{ k_2\}$, the vertex $(\mathbf{0}, \bc_{T_1}^{i \leftrightarrow j}) \in \V(Q)$ is mapped to $\bc_{T}^{i\leftrightarrow j} \in \V(P_T)$ under $\phi$. 
Finally, given $i \in \Lv(T_1) \setminus \{ k_1\}$ and $j \in \Lv(T_2) \setminus \{ k_2\}$ the vertex $(\bc_{T_1}^{i \leftrightarrow k_1}, \bc_{T_2}^{ k_2 \leftrightarrow j}) \in \V(Q)$ is mapped to $\bc_{T}^{i \leftrightarrow j} \in \V(P_T)$ under~$\phi$. 
We have considered all the vertices of both $Q$ and $P_T$, so $P_T \cong Q$.
\end{proof}

\section{Proof of our main theorem} \label{sec: facets for all trees}

In this section, we prove \Cref{thm:main}. We use the decomposition from \Cref{thm:gluing-tfp} together with \Cref{lemma:facets-ojoin} and \Cref{lemma:facets-tfp} to obtain explicit descriptions for the facets of $P_T$. \Cref{lemma: trees decompose into stars} show that every tree can be decomposed into star trees. The following result shows that \Cref{thm:main} is true for star trees, which will serve as a base case of our inductive argument.

\begin{lemma} \label{lemma:halfspaces-Sn}
    Let $S_n = (V, E)$ be the star tree on $n\geq 3$ leaves, and let $u$ be the only internal node of $S_n$, that is, $N(u) = \Lv(S_n)$. Then, $P_{S_n} = \Delta_{n,2}$. Hence, if $n\geq4$ an $\mathcal{H}$-representation of $P_{S_n}$ is given by 
    \[
    \begin{cases}
        x_{\{u, v\}} & \geq 0 \quad \text{for all $ \{u, v\} \in E$} \\
        -x_{\{u,v\}} + \sum\limits_{w \in N(u) \setminus \{v\}} x_{\{u,w\}} &\geq 0 \quad \text{for all $v \in N(u)$}\\
        \sum_{e \in E} x_e & = 2.
    \end{cases}
    \]
    The corresponding set of facets is $\mathcal{F} = \{ F^{\{u,v\}} \mid \{u,v\} \in E\} \cup \{G^{(u,v)} \mid v \in N(u)\}$ where
    \begin{align*}
        F^{\{u,v\}} &= \conv \left(\{ \bc^{i \leftrightarrow j} \mid i,j \in \Lv(S_n), \{u,v\} \not\in i \leftrightarrow j\} \right) \\
        G^{(u,v)} &= \conv \left( \{ \bc^{i \leftrightarrow v } \mid i \in \Lv(S_n) \setminus \{ v\} \}\right).
    \end{align*}
When $n=3$, the first class of halfspaces is redundant and the sets $F^{\{u,v\}}$ are not facets, but the rest of the statement holds.
\end{lemma}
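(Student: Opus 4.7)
My plan is to reduce everything to the well-known structure of the second hypersimplex $\Delta_{n,2}$. Since $S_n$ has a unique internal node $u$, every leaf-to-leaf path uses exactly the two edges incident to $u$ corresponding to its endpoints. Thus the natural bijection $\{u,v\} \leftrightarrow v$ between $E$ and $\Lv(S_n)$ induces an identification $\R^E \cong \R^n$ under which $\bc^{i\leftrightarrow j} = e_i + e_j$. The convex hull of all two-element incidence vectors is by definition $\Delta_{n,2}$, giving $P_{S_n} = \Delta_{n,2}$.

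Next I would derive the stated $\mathcal{H}$-representation from the classical description of $\Delta_{n,2}$ as $\{x \in \R^n : 0 \leq x_i \leq 1, \ \sum_i x_i = 2\}$. The inequalities $x_{\{u,v\}} \geq 0$ are immediate, and modulo the equation $\sum_e x_e = 2$ one has $x_i \leq 1 \iff 2x_i \leq \sum_j x_j \iff -x_i + \sum_{j \neq i} x_j \geq 0$, which translates exactly to the second class of inequalities in the statement. To identify the facets, I would observe that $\{x_v = 0\} \cap \Delta_{n,2}$ is the convex hull of all $e_i + e_j$ with $i, j \neq v$, which matches $F^{\{u,v\}}$, while $\{x_v = 1\} \cap \Delta_{n,2}$ is the convex hull of all $e_v + e_i$ with $i \neq v$, which matches $G^{(u,v)}$. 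Non-redundancy for $n \geq 4$ is standard: removing any facet-defining inequality admits an extra vertex, since one can exhibit a point in $\Delta_{n,2}$ that violates only that inequality (e.g.\ $(-1,1,1,1,0,\ldots,0)$ for the removed $x_1 \geq 0$ when $n \geq 4$).

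Finally, I would handle the $n = 3$ case separately. The equation $x_1 + x_2 + x_3 = 2$ together with $x_j \leq 1$ for $j \neq i$ yields $x_i = 2 - x_j - x_k \geq 0$, so the inequalities $x_{\{u,v\}} \geq 0$ are redundant. Moreover, the same calculation shows $x_i = 0$ forces $x_j = x_k = 1$, so $\{x_i = 0\} \cap \Delta_{3,2}$ is the single vertex $e_j + e_k$ rather than a facet; as $\Delta_{3,2}$ is $2$-dimensional its facets are its three edges, precisely the $G^{(u,v)}$.

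\textbf{Main obstacle.} The content is essentially bookkeeping, so the only real care required is in verifying non-redundancy of each inequality when $n \geq 4$ and, conversely, the specific collapse at $n = 3$; after that the identification with $\Delta_{n,2}$ does all the work.
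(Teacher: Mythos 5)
Your proposal is correct and follows essentially the same route as the paper: identify $P_{S_n}$ with $\Delta_{n,2}$ via the edge--leaf bijection, invoke the standard description $0 \leq x_i \leq 1$, $\sum_i x_i = 2$, rewrite $x_i \leq 1$ as $-x_i + \sum_{j \neq i} x_j \geq 0$ using the hyperplane equation, and read off the facets as the intersections with $x_v = 0$ and $x_v = 1$. Your treatment of the $n=3$ collapse is in fact slightly more explicit than the paper's (which defers to a figure), and your non-redundancy certificate is a harmless addition.
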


\begin{proof}
    By construction, we have
    \[
    P_{S_n} = \conv \left(\left\{ (x_{\{u, v_1\}}, \dots, x_{\{u, v_n\}}) \in \{0,1\}^n \mid \sum_{e \in E} x_e = 2 \right\} \right)= \Delta_{n,2}.
    \]
    The case $n=3$ is shown in \Cref{fig:example-graph-tree-path}. Assume that $n\geq4$. For a star tree, $E_{\leaf}(T) = E$, so $\sum_{e \in E} x_e = 2$, by \Cref{prop: big hyperplane}. By definition of $\Delta_{n,2}$, the halfspaces of $P_{S_n}$ are $0 \leq x_e \leq 1$ for all $e \in E$. Subtracting $2x_e \leq 2$ from $\sum_{e \in E} x_e = 2$ we get the desired $\mathcal{H}$-representation. Finally, given any $v \in \Lv(S_n)$, we have $F^{\{u,v\}} = P_{S_n} \cap \{ x_e = 0\}$,  $G^{(u,v)} = P_{S_n} \cap \{ x_e = 1\}$, and $\dim(F^{\{u,v\}}) = \dim(G^{(u,v)}) = n-2 = \dim(P_{S_n}) -1$, so the statement follows.
\end{proof}

We are now ready to compute the dimension of path polytopes. \Cref{prop: big hyperplane} implies that $P_T$ has codimension at least one. The next result implies that the codimension of $P_T$ is one provided that $T$ has no internal nodes of degree two.

\begin{proposition}
    \label{prop:dimension-polytope}
    Given a tree $T = (V,E)$ with $|V| > 2$, the dimension of the path polytope $P_T$ is~$|E| -  |\{u \in V \mid \deg(u) = 2\}| - 1$, and $P_T$ is contained in the linear space defined by
    \[
    \begin{cases}
        x_{\{u,v\}} - x_{\{u,w\}} &= 0  \text{\quad for all } u \in \Int(V) \text{ such that } N(u) = \{v,w\}\\
        \sum_{e \in E_{\leaf}(T)} x_{e} &= 2.
    \end{cases}
    \]
    \end{proposition}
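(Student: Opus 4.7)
The plan is to verify the stated equations hold on $P_T$ and then compute the dimension by contracting all degree-two internal nodes and inducting on the star decomposition from \Cref{lemma: trees decompose into stars}.

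First, I check that every vertex of $P_T$ satisfies the stated equations. The identity $\sum_{e \in E_\leaf(T)} x_e = 2$ is \Cref{prop: big hyperplane}. If $u \in \Int(T)$ has degree two with $N(u) = \{v,w\}$, then for any two distinct leaves $i,j$, the unique path $i \leftrightarrow j$ either avoids $u$, making $c^{i\leftrightarrow j}_{\{u,v\}} = c^{i\leftrightarrow j}_{\{u,w\}} = 0$, or visits $u$ as an interior vertex, forcing both incident edges into the path and giving $c^{i\leftrightarrow j}_{\{u,v\}} = c^{i\leftrightarrow j}_{\{u,w\}} = 1$. Hence $x_{\{u,v\}} - x_{\{u,w\}} = 0$ holds on every vertex of $P_T$, and therefore on $P_T$. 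These $|\{u : \deg u = 2\}|+1$ equations are linearly independent: ordering the degree-two nodes along each connected chain in $T$ yields an echelon form in the corresponding edge variables, and the left-hand side of the leaf-sum equation has positive coefficient sum while any linear combination of the degree-two left-hand sides has coefficient sum zero. This gives $\dim P_T \leq |E| - |\{u : \deg(u) = 2\}| - 1$.

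For the matching lower bound, I reduce to trees without degree-two internal nodes. Given such a node $u$ with $N(u) = \{v,w\}$, let $T'$ be obtained by replacing the edges $\{u,v\}$ and $\{u,w\}$ with a single edge $\{v,w\}$. Leaf-to-leaf paths in $T$ and $T'$ are in bijection, with the coordinate $x_{\{v,w\}}$ in $T'$ matching the common value $x_{\{u,v\}} = x_{\{u,w\}}$ in $T$. The linear map $\rho \colon \R^{E(T)} \to \R^{E(T')}$ that collapses $x_{\{u,v\}}$ and $x_{\{u,w\}}$ to $x_{\{v,w\}}$ and fixes the other coordinates restricts, on the hyperplane $\{x_{\{u,v\}} = x_{\{u,w\}}\}$, to an affine isomorphism $P_T \cong P_{T'}$. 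Iterating reduces the proposition to showing $\dim P_{\tilde T} = |E(\tilde T)| - 1$ for a tree $\tilde T$ with no degree-two internal nodes.

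I prove this by induction on $|\Int(\tilde T)|$. If $\tilde T$ has at most one internal node, it is a single edge (giving $\dim P_{\tilde T} = 0$) or a star $S_n$ with $n \geq 3$, where \Cref{lemma:halfspaces-Sn} gives $\dim P_{S_n} = n - 1 = |E(S_n)| - 1$. Otherwise, \Cref{lemma: trees decompose into stars} decomposes $\tilde T = T_1 \ast_{e_1,e_2} T_2$ with both $T_i$ of the same form and having strictly fewer internal nodes, and \Cref{thm:gluing-tfp} yields $P_{\tilde T} \cong (P_{T_1} \ojoin \{\mathbf 0\}) \times_{\Delta_{3}} (P_{T_2} \ojoin \{\mathbf 0\})$. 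By induction $\dim P_{T_i} = |E(T_i)| - 1$, and since $\mathbf 0 \notin \aff(P_{T_i})$ by \Cref{prop: big hyperplane}, each free join $P_{T_i} \ojoin \{\mathbf 0\}$ is full-dimensional in $\R^{E(T_i)}$.

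The main obstacle is controlling the dimension of this toric fiber product. Its affine hull sits in the codimension-two subspace of $\R^{E(T_1)} \oplus \R^{E(T_2)}$ cut out by the matching condition $\pi_1(x) = \pi_2(y)$, yielding the upper bound $|E(T_1)| + |E(T_2)| - 2 = |E(\tilde T)| - 1$. For equality I will exhibit $|E(\tilde T)|$ affinely independent vertices by combining the inductively-guaranteed affinely independent vertex sets inside each $P_{T_i} \ojoin \{\mathbf 0\}$ with explicit matched pairs drawn from each of the three vertex classes described after \Cref{thm:gluing-tfp}, which together span the full codimension-two subspace. Combined with the upper bound from the first paragraph, this closes the induction, and pulling back through the contractions proves the proposition.
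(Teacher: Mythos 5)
Your route is essentially the paper's: verify the two families of equations to get the upper bound on the dimension, then obtain the matching lower bound by decomposing the tree into stars and tracking dimension through the toric fiber product of \Cref{thm:gluing-tfp}, with \Cref{lemma:halfspaces-Sn} as the base case. The only structural difference is that you contract degree-two nodes up front (re-deriving what the paper records separately as \Cref{prop:gluing-s2}), whereas the paper keeps them in the decomposition as $S_2$ factors and uses $\dim P_{S_2}=0$; both bookkeeping choices give the same count. Your verification of the equations and of their linear independence is fine, as is the codimension-two upper bound on the toric fiber product coming from the matching condition $\pi_1(x)=\pi_2(y)$.

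The gap is in your final paragraph, which is a plan rather than a proof. The entire lower bound rests on the equality $\dim\bigl((P_{T_1}\ojoin\{\mathbf 0\})\times_{\Delta_{3}}(P_{T_2}\ojoin\{\mathbf 0\})\bigr)=|E(T_1)|+|E(T_2)|-2$, and you only announce that you ``will exhibit'' $|E(\tilde T)|$ affinely independent vertices ``which together span the full codimension-two subspace.'' That assertion is precisely what has to be proved, and it is not automatic: affine independence within each factor does not by itself give affine independence of the combined family in $\R^{E(T_1)}\oplus\R^{E(T_2)}$, because the matched vertices $(\bc_{T_1}^{i\leftrightarrow k_1},\bc_{T_2}^{k_2\leftrightarrow j})$ are supported on both blocks and can create cross-relations with the vertices $(\bc_{T_1}^{i\leftrightarrow j},\mathbf 0)$ and $(\mathbf 0,\bc_{T_2}^{i\leftrightarrow j})$. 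To close it, either carry out the construction explicitly --- for instance, lift $|E(T_1)|$ affinely independent vertices of $P_{T_1}\ojoin\{\mathbf 0\}$ by pairing each path through $e_1$ with a fixed path $k_2\leftrightarrow j_0$ in $T_2$ (affine independence survives because projection to the first block recovers the original set), then adjoin enough vertices of the form $(\mathbf 0,\bc_{T_2}^{i\leftrightarrow j})$ and $(\bc_{T_1}^{i_0\leftrightarrow k_1},\bc_{T_2}^{k_2\leftrightarrow j})$ and check independence after projecting to the second block --- or prove (or cite) the general additivity $\dim(P_1\times_{\Delta_n}P_2)=\dim P_1+\dim P_2-\dim\Delta_n$ for surjective integral projections, which is the one-line form in which the paper disposes of this same step.
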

\begin{proof}

Let $u\in V$ such that $N(u) = \{v,w \}$. For any pair of distinct leaves $i,j \in \Lv(T)$, we have $\{u, v\} \in i \leftrightarrow j$ if and only if $\{u, w\} \in i \leftrightarrow j$. Hence, every point in the polytope satisfies the first set of equations. The polytope also satisfies the last equation by \Cref{prop: big hyperplane}. These equations are linearly independent, so the dimension of the polytope is at most
\[
|E|  - | \{ v \in V \mid \deg(v) = 2\}| -1.
\]
We show that it is equal to this quantity. Consider a gluing of two trees $T = T_1 \ast_{e_1, e_2} T_2$. Let $\pi_i : P_{T_i} \ojoin \{ \mathbf{0}\} \to \Delta_{3}$ $(i = 1,2)$ be a pair of gluing integral projections for $(T_1, T_2, e_1, e_2)$. Then
\begin{align*}
\dim(P_{T}) &= \dim((P_{T_1} \ojoin \{\mathbf{0}\}) \times_{\Delta_{3}} (P_{T_1} \ojoin \{\mathbf{0}\})) =\\
    &= \dim(P_{T_1} \ojoin \{\mathbf{0}\}) + (P_{T_2} \ojoin \{\mathbf{0}\}) - \dim(\Delta_{3})  =  \\
    & =(\dim(P_{T_1}) + 1) +  (\dim(P_{T_2}) + 1) - 2 =\\
&= \dim(P_{T_1}) + \dim(P_{T_2}).
\end{align*}
Hence, if $T = S_{n_1} \ast_{e_1,e_2} \cdots \ast_{e_{r-1}, e_r} S_{n_r}$ where $S_{n_i}$ is a star tree, then $\dim(P_T) = \sum_{i=1}^r \dim(P_{S_{n_i}})$ and $|E| = 1 + \sum_{i=1}^r (n_i-1)$. Finally, $\dim(P_{S_{2}}) = 0$ and $\dim(P_{S_{n}}) = n-1$ for $n>2$ by \Cref{lemma:halfspaces-Sn}, so the statement follows. 
\end{proof}

\Cref{thm:main} omits the case when $T$ has internal nodes of degree~$2$. This is because the polytope $P_T$ is isomorphic $P_{T'}$ where $T'$ is formed from $T$ by merging  edges $\{u,v\}$ and $\{u,w\}$ to the edge $\{v,w\}$ for any vertex $u$ of degree $2$. Both trees have the same set of leaves. This follows from the following result.

\begin{proposition}
If $T=T'\ast_{e_{1},e_2} S_2$, then $P_T \cong P_{T'}$.
\label{prop:gluing-s2}
\end{proposition}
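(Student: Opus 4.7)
The plan is to apply \Cref{thm:gluing-tfp} with $T_2 = S_2$ and show that the toric fiber product essentially collapses, because $P_{S_2}$ is a single point. Indeed, $S_2$ has exactly two leaves connected by the unique path through its degree-two internal vertex, and this path uses both edges of $S_2$; hence $P_{S_2} = \{(1,1)\} \subset \R^{E(S_2)}$, and $P_{S_2} \ojoin \{\mathbf{0}\}$ is a one-dimensional segment with only two vertices, $\mathbf{0}$ and $(1,1)$.

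Next I would analyze the gluing integral projection $\pi_2 \colon P_{S_2} \ojoin \{\mathbf{0}\} \to \Delta_3$. By the defining properties, $\pi_2(\mathbf{0}) = (1,0,0)$ and $\pi_2((1,1)) = (0,1,0)$, because the unique path in $S_2$ contains $e_2$; in particular $(0,0,1)$ is never attained on the $S_2$ side. Combined with \Cref{thm:gluing-tfp}, this pins down the vertex set of
\[
P_T \;\cong\; (P_{T'} \ojoin \{\mathbf{0}\}) \times_{\Delta_3} (P_{S_2} \ojoin \{\mathbf{0}\})
\]
to exactly two families of matched pairs: the pairs $(\bc_{T'}^{i \leftrightarrow j}, \mathbf{0})$ for $i,j \in \Lv(T') \setminus \{k_1\}$ (both projecting to $(1,0,0)$), and the pairs $(\bc_{T'}^{k_1 \leftrightarrow i}, (1,1))$ for $i \in \Lv(T') \setminus \{k_1\}$ (both projecting to $(0,1,0)$). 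The origin of $P_{T'} \ojoin \{\mathbf{0}\}$ contributes no vertex, since $(0,0,1)$ has no partner on the $S_2$ side. These matched pairs are in evident bijection with the vertices of $P_{T'}$, one per pair of distinct leaves of $T'$.

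To conclude, I would invoke the affine bijection $\phi$ constructed in the proof of \Cref{thm:gluing-tfp}, which sends both $\ba_{e_1}$ and $\ba_{e_2}$ to $\tfrac{1}{2}\bb_{\{v_1,v_2\}}$ and fixes the other basis vectors. Under the identification $E(T) = (E(T') \setminus \{e_1\}) \cup \{\{v_1,v_2\}, \{a, v_2\}\}$, where $v_2$ is the new degree-two internal vertex and $a$ the new leaf of $T$, the first family of matched pairs maps to $\bc_T^{i \leftrightarrow j}$ and the second to $\bc_T^{a \leftrightarrow i}$, yielding the desired affine isomorphism $P_{T'} \cong P_T$. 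I expect the main obstacle to be notational rather than conceptual: carefully tracking the identification of edges and vertices around the gluing edge, and confirming that $\phi$ remains a genuine affine bijection between the polytopes even though the toric fiber product lives in the larger ambient space $\R^{E(T')} \times \R^{E(S_2)}$. Once these identifications are pinned down, the statement follows directly from \Cref{thm:gluing-tfp} and the degenerate nature of $P_{S_2}$.
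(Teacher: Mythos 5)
Your proposal is correct, but it takes a genuinely different route from the paper. The paper's proof of \Cref{prop:gluing-s2} does not invoke \Cref{thm:gluing-tfp} at all: it simply writes down the affine injection $\phi:\R^{E(T')}\to\R^{E(T)}$ with $\phi(\ba_e)=\bb_e$ for $e\neq e_1$ and $\phi(\ba_{e_1})=\bb_f+\bb_g$ (where $f$ is the identified edge and $g$ the other edge of $S_2$), and observes that it maps vertices bijectively to vertices -- a two-line argument that ``subdivides'' the edge $e_1$. Your route instead runs the gluing through the toric fiber product and watches it collapse because $P_{S_2}=\{(1,1)\}$; if you trace your composite map (the linear embedding $x\mapsto(x,x_{e_1},x_{e_1})$ into the fiber product followed by the $\phi$ of \Cref{thm:gluing-tfp}), you recover exactly the paper's map $\ba_{e_1}\mapsto\bb_f+\bb_g$, so the two proofs agree at bottom. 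What your approach buys is uniformity (every gluing is handled by the same theorem); what it costs is a technical caveat you should make explicit: with $T_2=S_2$ the image $\pi_2(P_{S_2}\ojoin\{\mathbf{0}\})=\conv\{(1,0,0),(0,1,0)\}$ is only an edge of $\Delta_3$, so the hypothesis $\pi_1(P_1)=\pi_2(P_2)=\Delta_3$ built into the paper's definition of the toric fiber product (and into \Cref{thm:gluing-tfp} as stated) is not literally satisfied. The proof of \Cref{thm:gluing-tfp} never uses surjectivity onto $\Delta_3$ -- it only enumerates matched vertex pairs -- so your argument survives, but you must either note this or verify the degenerate case by hand; you should also make explicit that your ``evident bijection'' of vertex sets between $P_{T'}$ and the fiber product is induced by the injective linear map $x\mapsto(x,x_{e_1},x_{e_1})$, since a vertex bijection alone does not yield a polytope isomorphism. (Relatedly, one reason the paper keeps this proposition separate from the inductive machinery is that the facet lemma for fiber products over $\Delta_n$ would certainly fail in this degenerate setting; you do not use it here, so no harm done.)
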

\begin{proof}
    Let $f \in E(T)$ be the edge resulting from identifying $e_1$ with $e_2$, and let $g \in E(T)$ be the edge in $S_2$ distinct from $e_2$. Let $\{\ba_e \mid e \in E(T')\}$ be the standard basis of $\R^{E(T')}$, and let $\{\bb_e \mid e \in E(T)\}$ be the standard basis of $\R^{E(T)}$. Consider the affine map $\phi:\R^{E(T')} \to \R^{E(T)}$ given by $\phi(\ba_e) = \bb_e$ if $e \neq e_1$ and $\phi(\ba_{e_1}) = \bb_f + \bb_g$. The map $\phi$ is injective and maps the vertices of $P_{T'}$ to the vertices of $P_T$, so $P_T \cong P_{T'}$.
\end{proof}

Propositions \ref{prop:dimension-polytope} and \ref{prop:gluing-s2} imply that we can assume without loss of generality that our trees do not have internal nodes of degree two, so that the corresponding path polytope has codimension one. 

\begin{theorem}\label{thm:facets of trees}
Let $T=(V,E)$ be a tree with $|E| \geq 3$ and with no internal nodes of degree 2. Then, the  facets of $P_T$ are 
\[\{F^{\{u,v\}}_T \mid \{u,v\} \in E, \deg(u), \deg(v) \neq 3\} \cup \{G^{(u,v)}_T \mid u\in \Int(T), v\in N_T(u)\}, \]
where
\begin{align*}
F_{T}^{\{u,v\}} &=\conv \big( \{\bc^{i \leftrightarrow j } \mid i,j \in \Lv(T), \{u,v \} \not\in i\leftrightarrow j \}\big) = 
 P_T \cap \{ x \in \R^{E(T)} \mid x_{\{u,v\}} = 0\}, \text{ and} \\
G_T^{(u,v)} &= \conv \big( \{\bc^{i \leftrightarrow j} \mid  i,j \in \Lv(T), \{u,v\} \in i \leftrightarrow j \text{ or } \{u,w\} \notin i \leftrightarrow j \text{ for any } w \in N_T(u) \} \big) = \\
&= P_T \cap \{ x \in \R^{E(T)} \mid -x_{\{u,v\}} + \sum_{w \in N(u) \setminus \{v\}} x_{\{u,w\}}= 0\}.
\end{align*}
\end{theorem}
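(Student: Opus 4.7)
I will prove the theorem by strong induction on $k = |\Int(T)|$. The base case $k=1$ is handled directly by \Cref{lemma:halfspaces-Sn}: for a star $S_n$ with unique internal node $u$ of degree $n$, the condition $\deg(u),\deg(v)\neq 3$ excludes the $F^{\{u,v\}}$-facets precisely when $n=3$, and the $G^{(u,v)}$-facets appear for every neighbor, matching the theorem.

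For the inductive step, I apply \Cref{lemma: trees decompose into stars} to write $T = T_1 \ast_{\{u_1,v_1\},\{u_2,v_2\}} T_2$, where the $v_i$ are the freshly added leaves. Each $T_i$ has fewer than $k$ internal nodes, and since $\deg_{T_i}(u_i)=\deg_T(u_i)\geq 3$ and $v_i$ is a leaf, $T_i$ has no degree-2 internal nodes and the inductive hypothesis applies. By \Cref{thm:gluing-tfp}, there is an affine isomorphism $\phi:Q\to P_T$ where $Q=(P_{T_1}\ojoin\{\mathbf 0\})\times_{\Delta_3}(P_{T_2}\ojoin\{\mathbf 0\})$. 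Combining \Cref{lemma:facets-tfp} and \Cref{lemma:facets-ojoin}, every facet of $Q$ has the form $F_i'\times_{\Delta_3}(P_{T_{3-i}}\ojoin\{\mathbf 0\})$, where $F_i'$ is either $F\ojoin\{\mathbf 0\}$ for $F$ a facet of $P_{T_i}$ (described by the inductive hypothesis), or the pyramid base $P_{T_i}$ itself.

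I then match each candidate with one of the claimed facets of $P_T$ via $\phi$: a facet $F^{\{u,v\}}_{T_i}\ojoin\{\mathbf 0\}$ with $\{u,v\}\neq\{u_i,v_i\}$ becomes $F^{\{u,v\}}_T$; the case $\{u,v\}=\{u_i,v_i\}$ (which arises only when $\deg_{T_i}(u_i)\neq 3$) becomes $F^{\{u_1,u_2\}}_T$; each $G^{(u,w)}_{T_i}\ojoin\{\mathbf 0\}$ becomes $G^{(u,w')}_T$ with $w'=u_{3-i}$ when $(u,w)=(u_i,v_i)$ and $w'=w$ otherwise; and the pyramid base $P_{T_i}$ becomes $G^{(u_{3-i},u_i)}_T$, which is the same facet as the one arising from $G^{(u_{3-i},v_{3-i})}_{T_{3-i}}\ojoin\{\mathbf 0\}$ on the opposite side, so the two candidates describe a single facet and must not be double-counted. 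Validity of each defining inequality is direct by noting that each vertex $\bc^{i\leftrightarrow j}$ uses $0$ or $2$ edges incident to any internal node, and the stated vertex descriptions of $F^{\{u,v\}}_T$ and $G^{(u,v)}_T$ follow by characterizing when equality holds. The dimension $|E|-2$ of each claimed facet is confirmed by combining \Cref{prop:dimension-polytope} with the TFP dimension additivity established in its proof, applied recursively to the matched subfacets.

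The main obstacle is the edge case in which $F^{\{u_i,v_i\}}_{T_i}\ojoin\{\mathbf 0\}$ is a facet of $P_{T_i}\ojoin\{\mathbf 0\}$ but $\deg_T(u_{3-i})=3$: the TFP-candidate pulls back to $F^{\{u_1,u_2\}}_T$, which the theorem correctly excludes. The resolution is that when $u_{3-i}$ has degree $3$ in $T$, the subtree $T_{3-i}$ looks locally like a star $S_3$, and by \Cref{lemma:halfspaces-Sn} the $F$-type inequality $x_{\{u_{3-i},v_{3-i}\}}\geq 0$ is redundant in $\Delta_{3,2}$; this redundancy propagates through the toric fiber product and collapses the candidate face to dimension strictly less than $|E|-2$, in fact exhibiting it as a proper subface of $G^{(u_{3-i},b)}_T$ for every $b\in N_T(u_{3-i})\setminus\{u_i\}$. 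An explicit rank computation on the vertex vectors of this candidate face confirms the collapse and closes the induction.
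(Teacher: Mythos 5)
Your proposal follows the same route as the paper's proof: strong induction on the number of internal nodes, the star decomposition of \Cref{lemma: trees decompose into stars}, the isomorphism of \Cref{thm:gluing-tfp}, and \Cref{lemma:facets-tfp} together with \Cref{lemma:facets-ojoin} to enumerate candidate facets, followed by essentially the same matching of candidates to $F_T^{\{u,v\}}$ and $G_T^{(u,v)}$ (including the identification of the pyramid base with $G_T^{(u_{3-i},u_i)}$ and of the glued edge with $F_T^{\{u_1,u_2\}}$).

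The genuine gap is in the step where you certify that each surviving candidate has dimension $|E|-2$. You invoke ``TFP dimension additivity \ldots applied recursively to the matched subfacets,'' but the additivity used in the proof of \Cref{prop:dimension-polytope} is only asserted for the full pyramids $P_{T_i}\ojoin\{\mathbf 0\}$, whose projections are surjective onto $\Delta_3$. For a facet $F_1$ of $P_{T_1}\ojoin\{\mathbf 0\}$ the restricted projection $\pi_1|_{F_1}$ need not be surjective --- for instance $F_{T_1}^{e_1}\ojoin\{\mathbf 0\}$ misses the vertex $(0,1,0)$, the pyramid base $P_{T_1}$ misses $(0,0,1)$, and $G_{T_1}^{(u_1,k_1)}\ojoin\{\mathbf 0\}$ misses $(1,0,0)$ when $T_1$ is a star --- and then the formula $\dim(F_1\times_{\Delta_3}\hat P_{T_2})=\dim F_1+\dim \hat P_{T_2}-2$ is exactly what fails in your own ``edge case'' $\deg(u_{3-i})=3$. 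So the blanket dimension claim is unjustified precisely where the degree-$3$ phenomenon lives, and even for the surjective candidates the additivity for restricted projections is asserted rather than proved. The paper closes this by a direct computation: for each candidate $F$ it picks one vertex $\bc^{i\leftrightarrow j}\notin F$ and writes every other vertex of $P_T$ as an explicit affine combination of $\bc^{i\leftrightarrow j}$ and vertices of $F$ (of the form $\bc^{i'\leftrightarrow j'}=\bc^{i\leftrightarrow j}+\bc^{i'\leftrightarrow k}-\bc^{i\leftrightarrow k}+\bc^{j'\leftrightarrow l}-\bc^{j\leftrightarrow l}$), so that $\aff(F\ojoin \bc^{i\leftrightarrow j})=\aff(P_T)$ and hence $\dim F=\dim(P_T)-1$; this is also where the hypothesis $\deg(u)>3$ is actually used, to supply the auxiliary leaves $k$ and $l$. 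Your argument needs this computation (or a case-by-case surjectivity-plus-additivity substitute); your observation that the collapsed candidate lies in two distinct $G$-facets when $\deg(u_{3-i})=3$ is a nice shortcut, but it presupposes that those $G$'s have already been certified as facets.
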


\begin{proof}
We prove it by strong induction on $r = |\Int(T)|$. 
The case $r=1$ follows from \Cref{lemma:halfspaces-Sn}. Suppose the claim holds for all trees with at most $r-1$ internal nodes and let $T$ have $r$ internal nodes.
Let $T = T_1 \ast_{e_1, e_2} T_2$, where $T_1$ and $T_2$ are trees with fewer than $r$ internal nodes (see \Cref{lemma: trees decompose into stars}).
Let $e_1 = \{u_1, k_1\}$ and $e_2 = \{u_2, k_2\}$ with $k_1 \in \Lv(T_1)$ and $k_2 \in \Lv(T_2)$.
For simplicity of notation, let $\hat{P}_{T_i} = P_{T_i} \ojoin \{\mathbf{0}\}$ for $i=1,2$. Hence, $P_T \cong \hat{P}_{T_1} \times_{\Delta_{3}} \hat{P}_{T_2}$ by \Cref{thm:gluing-tfp}. 
The facets of $P_{T}$ are isomorphic to facets of the form $\{F_1 \times_{\Delta_{3}} \hat{P}_{T_2}  \mid  F_1 \text{ facet of } \hat{P}_{T_1}\}$ and $\{\hat{P}_{T_1} \times_{\Delta_{3}} F_2 \mid  F_2 \text{ facet of } \hat{P}_{T_2}\}$ by \Cref{lemma:facets-tfp}.
By \Cref{lemma:facets-ojoin} and the induction hypothesis, the facets of $\hat{P}_{T_i}$ for $i=1,2$ are 
\begin{enumerate}
    \item $F_{T_i}^{\{u,v \}} \ojoin \{\mathbf{0}\}$ for $\{u,v\} \in E(T_i)$ with $\deg(u), \deg(v) \neq 3$,
    \item $G_{T_i}^{(u,v)} \ojoin \{\mathbf{0}\}$ for $u \in \Int(T_i)$ and $v$ in the neighborhood $N_{T_i}(v)$ of $v$ in $T_i$, and
    \item $P_{T_i}$.
\end{enumerate}
    
We focus on the facets of the form $\{F_1 \times_{\Delta_{3}} \hat{P}_{T_2}  \mid  F_1 \text{ facet of } \hat{P}_{T_1}\}$. By symmetry, the same reasoning holds for the facets $\{\hat{P}_{T_1} \times_{\Delta_{3}} F_2 \mid  F_2 \text{ facet of } \hat{P}_{T_2}\}$.

\underline{Case 1:} Consider sets of the form $(F_{T_1}^{\{u,v\}} \ojoin \{\mathbf{0}\})$ for $\{u,v\} \in E(T_1)$. First, let $\{u,v\}  = e_1 = \{u_1, k_1\} \in E(T_1)$. Then,
\begin{align*}
    (F^{\{u_1,k_1\}}_{T_1} \ojoin \{ \mathbf{0}\})\times_{\Delta_{3}} \hat{P}_{T_2} = \conv \big(& ({F}_{T_1}^{\{u_1,k_1\}} \times \{\mathbf{0}\}) \cup (\{\mathbf{0}\} \times {F}_{T_2}^{\{u_2,k_2\}} )\big),
\end{align*}
which is isomorphic to $F_T^{\{u_1, u_2\}}$. Note that
\begin{align*}
    &F_{T_1}^{\{u_1, k_1\}} \times \{ \mathbf{0}\} \subset L_1 \coloneqq \left\{ x \in \R^{E(T_1) \cup E(T_2)} \mid x_e = 0 \text{ for all } e \in E(T_2), \sum_{e \in E_\leaf(T_1) \setminus \{e_1\}} x_e = 2 \right\}, \\
    & \{ \mathbf{0}\} \times F_{T_2}^{\{u_2, k_2\}}\subset L_2 \coloneqq \left\{ x \in \R^{E(T_1) \cup E(T_2)} \mid x_e = 0 \text{ for all } e \in E(T_1), \sum_{e \in E_\leaf(T_2) \setminus \{e_2\}} x_e = 2 \right\}.
\end{align*}
Since $L_1$ and $L_2$ are skew linear spaces, we have the free join $({F}_{T_1}^{\{u_1,k_1\}} \times \{\mathbf{0}\}) \ojoin (\{\mathbf{0}\} \times {F}_{T_2}^{\{u_2,k_2\}})$.
By induction hypothesis, $F_{T_i}^{\{u_i, k_i\}}$ is a facet of $P_{T_i}$  ($\dim(F_{T_i}^{\{u_i, k_i\}}) = \dim(P_{T_1}) -1= E(T_i) - 2$) if and only if $\deg(u_i) > 3$, for $i=1,2$. Hence,
\begin{align*}
    \dim(F_T^{\{u_1, u_2\}}) &= \dim(F_{T_1}^{\{u_1, k_1\}}) + \dim(F_{T_2}^{\{u_2, k_2\}}) + 1 \leq (|E(T_1)| - 2) + (|E(T_2)| - 2) - 1 = \\
    &= |E(T)| - 2 = \dim(P_T)-1
\end{align*}
with equality if and only if $\deg(u_1), \deg(u_2) > 3$. So $F_T^{\{u_1, u_2\}} = P_T \cap \{x \in \R^{E(T)} \mid x_{\{u_1, u_2\}} = 0 \}$ is a facet of $P_T$ if and only if $\deg(u_1), \deg(u_2) > 3$.

Next, let $\{ u,v\} \in E(T_1) \setminus \{ e_1\}$ such that $\deg(u), \deg(v) \neq 3$. Assume that $u \in \Int(T_1)$, so $\deg(u) \geq 4$. We have
\begin{align*}
    (F^{\{u,v\}}_{T_1} \ojoin \{ \mathbf{0}\})\times_{\Delta_{3}} \hat{P}_{T_2} = \conv \big(&\{(\bc_{T_1}^{i \leftrightarrow k_1},\bc_{T_2}^{k_2 \leftrightarrow j}) \mid  \bc_{T_1}^{i \leftrightarrow k_1} \in {F}_{T_1}^{\{u,v\}} ,\bc_{T_2}^{k_2 \leftrightarrow j} \in {P}_{T_2}\} \\
    &\cup \{(\bc_{T_1}^{i \leftrightarrow j}, \mathbf{0}) \mid \bc_{T_1}^{i \leftrightarrow j} \in {F}_{T_1}^{\{u,v\}} , i,j \neq k_1\} \\
    &\cup \{ (\mathbf{0}, \bc_{T_2}^{i \leftrightarrow j}) \mid \bc_{T_2}^{i \leftrightarrow j} \in {P}_{T_2}, i,j \neq k_2 \} \big),
\end{align*}
which is isomorphic to $F_{T}^{\{u,v\}}$. Fix two leaves $i,j \in \Lv(T)$ such that $\bc_{T}^{i \leftrightarrow j} \not\in F_{T}^{\{u,v\}}$, that is, $\{u,v\} \in i \leftrightarrow j$. Note that $F_T^{\{u,v\}} \subset \{ x \in \R^{E(T)} \mid x_{\{u,v\}} = 0\} \not\ni \bc_{T}^{i \leftrightarrow j}$. Hence, we have the free join $F_T^{\{u,v\}} \ojoin \bc_{T}^{i \leftrightarrow j}$. We show that $\aff(P_T) = \aff(F_T^{\{u,v\}} \ojoin \bc_{T}^{i \leftrightarrow j})$. Indeed, consider any two leaves $i', j'$ such that $\bc_{T}^{i \leftrightarrow j} \in P_T \setminus F_T^{\{u,v\}}$. Without loss of generality, suppose that $i \neq i'$, $\{u,v\} \not\in i \leftrightarrow i'$,  and $\{u,v\} \in i \leftrightarrow v$ (permute the roles of $i,j$ and $i', j'$ if needed).
Since $\deg(u) > 3$, there exists a leaf $k \in \Lv(T) \setminus \{i, i'\}$ such that $\bc_{T}^{i \leftrightarrow k}, \bc_{T}^{i' \leftrightarrow k} \in F_T^{\{u,v\}}$. We need to consider two cases. If $\deg(v) = 1$, then $j = j' = v$ and $\bc_{T}^{i' \leftrightarrow j} = \bc_{T}^{i \leftrightarrow j} + \bc_{T}^{i' \leftrightarrow k} - \bc_{T}^{ i \leftrightarrow k} \in \aff(F_T^{\{u,v\}} \ojoin \bc_{T}^{ i \leftrightarrow j})$. If $\deg(v) > 3$, then there exists a leaf $l \in \Lv(T) \setminus \{j, j'\}$ such that $\bc_{T}^{ j \leftrightarrow l}, \bc_{T}^{ j' \leftrightarrow l} \in F_T^{\{u,v\}}$, so
\[
\bc_{T}^{ i' \leftrightarrow j'} = \bc_{T}^{ i \leftrightarrow j} + \bc_{T}^{ i' \leftrightarrow k} - \bc_{T}^{ i \leftrightarrow k} + \bc_{T}^{ j' \leftrightarrow l} - \bc_{T}^{ j \leftrightarrow l} \in \aff(F_T^{\{u,v\}} \ojoin \bc_{T}^{ i \leftrightarrow j}).
\]
We have seen that every vertex of $P_T$ lies in $\aff(F_T^{\{u,v\}} \ojoin \bc_{T}^{ i \leftrightarrow j})$, so $\aff(P_T) = \aff(F_T^{\{u,v\}} \ojoin \bc_{T}^{ i \leftrightarrow j})$. Therefore, $\dim(F_T^{\{u,v\}}) = \dim(P_T) - 1$, so $F_T^{\{u,v\}} = P_T \cap \{x \in \R^{E(T)} \mid x_{\{u,v\}} = 0\}$ is a facet of~$P_T$.

\underline{Case 2:} Consider sets of the form $(G^{(u,v)}_{T_1} \ojoin \{ \mathbf{0}\})\times_{\Delta_{3}} \hat{P}_{T_2}$ where $u \in \Int(T_1)$ and $v \in N_{T_1}(u)$. We have
\begin{align*}
    (G^{(u,v)}_{T_1} \ojoin \{ \mathbf{0}\})\times_{\Delta_{3}} \hat{P}_{T_2} = \conv \big(&\{(\bc_{T_1}^{i \leftrightarrow k_1},\bc_{T_2}^{k_2 \leftrightarrow j}) \mid  \bc_{T_1}^{i \leftrightarrow k_1} \in {G}^{(u,v)}_{T_1} ,\bc_{T_2}^{k_2 \leftrightarrow j} \in {P}_{T_2}\} \\
    &\cup \{(\bc_{T_1}^{i \leftrightarrow j}, \mathbf{0}) \mid \bc_{T_1}^{i \leftrightarrow j} \in {G}^{(u,v)}_{T_1} , i,j \neq k_1\} \\
    &\cup \{ (\mathbf{0}, \bc_{T_2}^{i \leftrightarrow j}) \mid \bc_{T_2}^{i \leftrightarrow j} \in {P}_{T_2}, i,j \neq k_2 \} \big)
\end{align*}
which is isomorphic to $G_{T}^{(u,v)}$ if $(u,v) \neq (u_1, k_1)$, and to $G_{T}^{(u_1,u_2)}$ if $(u,v) = (u_1, k_1)$.
Suppose that $(u,v) \neq (u_1, k_1)$, the same reasoning applies to the other case. Fix two leaves $i,j \in \Lv(T)$ such that $\bc_T^{i \leftrightarrow j } \not\in G_T^{(u,v)}$, so there exist $w_1, w_2 \in N_T(u) \setminus \{v\}$ such that $\{u,w_1\}, \{u, w_2\} \in i \leftrightarrow j$.
Note that $G_{T}^{(u,v)} \subset \{ x \in \R^{E(T)} \mid x_{\{u,v\}} = \sum_{w \in N(v) \setminus \{ v\}} x_{\{u,w\}}\} \not \ni \bc_T^{i \leftrightarrow j}$.
Hence, we have the free join $G_{T}^{(u,v)} \ojoin \{\bc_T^{i \leftrightarrow j}\}$. We show that $\aff(P_T) = \aff(G_{T}^{(u,v)} \ojoin \bc_T^{i \leftrightarrow j})$. Indeed, consider any two leaves $i', j'$ such that $\bc_T^{i' \leftrightarrow j'} \in P_T \setminus G_{T}^{(u,v)}$. Pick a leaf $k \in \Lv(T)$ such that $\{u,v\} \in i \leftrightarrow k$, so $\bc_T^{i \leftrightarrow k} \in G_T^{(u,v)}$. For the same leaf $k$, we also have $\bc_T^{j \leftrightarrow k}, \bc_T^{i' \leftrightarrow k}, \bc_T^{j' \leftrightarrow k} \in G_T^{(u,v)}$, so
\[
\bc_T^{i' \leftrightarrow j'} = \bc_T^{i \leftrightarrow j} + \bc_T^{i' \leftrightarrow k} - \bc_T^{i \leftrightarrow k} + \bc_T^{j' \leftrightarrow k} - \bc_T^{j \leftrightarrow k} \in \aff(G_{T}^{(u,v)} \ojoin \bc_T^{i \leftrightarrow j}).
\]
This implies that $\aff(P_T) = \aff(G_{T}^{(u,v)} \ojoin \bc_T^{i \leftrightarrow j})$. Therefore, $\dim(G_{T}^{(u,v)}) = \dim(P_T) - 1$, so $G_{T}^{(u,v)} = P_T \cap \{ x \in \R^{E(T)} \mid x_{\{u,v\}} = \sum_{w \in N(v) \setminus \{ v\}} x_{\{u,w\}}\}$ is a facet of $P_T$.

\underline{Case 3:} Consider sets of the form $P_{T_1} \times_{\Delta_{3}} \hat{P}_{T_2}$. We have that
\begin{align*}
    P_{T_1} \times_{\Delta_{3}} \hat{P}_{T_2} = \conv\big( &\{(\bc^{T_1, i \leftrightarrow k_1}, \bc^{T_2, k_2 \leftrightarrow j}) \mid i \in \Lv(T_1) \setminus \{k_1 \}, j \in \Lv(T_2) \setminus \{ k_2\} \} \\
    &\cup\{(\bc^{T_1, i \leftrightarrow j}, \mathbf{0}) \mid i,j \in \Lv(T_1) \setminus \{k_1\}\}\big)
\end{align*}
is included in $\hat{P}_{T_1} \times_{\Delta_{3}} (G^{(u_2,k_2)}_{T_2} \ojoin \{ \mathbf{0}\}) \cong G_T^{(u_2,u_1)}$, so it is not a new facet of $P_T$.
\end{proof}

We are now ready to prove \Cref{thm:main}l that is, (\ref{eq:main}) is a  minimal $\mathcal{H}$-representation of the path polytope $P_T$. 

\begin{proof}
[Proof of \Cref{thm:main}]
Consider a tree $T$ with no internal nodes of degree $2$. The last equality in (\ref{eq:main}) comes from \Cref{prop: big hyperplane}. Every vertex $\bc^{i \leftrightarrow j}$ satisfies the inequalities and therefore every point in the polytope also satisfies the inequalities. By \Cref{thm:facets of trees}, every facet of $P_T$ is the intersection of the polytope with a hyperplane corresponding to the boundary of a halfspace in the $\mathcal{H}$-representation given in (\ref{eq:main}). Hence, (\ref{eq:main}) is a minimal $\mathcal{H}$-representation.  Lastly, $\dim(P_T) = |E| -1 $ by \Cref{prop:dimension-polytope}.
\end{proof}

\begin{corollary}
    \label{cor: dimension of polytope}
Given a tree $T = (V,E)$, let $\mathcal{H}$ be the halfspaces given in \Cref{thm:main}. Then, $P_T$ has dimension $|E| - \{u \in V \mid \deg(u)=2\}-1$ and a halfspace description 
\begin{align*}\label{eq:degree2 halfspaces}
 \mathcal{H} \cup \{x_{\{u,v\}} - x_{\{u,w\}} = 0 \mid u\in \Int(T), N(u) = \{v,w\}\}. 
\end{align*}
\end{corollary}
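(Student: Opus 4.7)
The strategy is to reduce the statement to Theorem~\ref{thm:main} by collapsing every internal node of degree $2$ in $T$ via iterated use of Proposition~\ref{prop:gluing-s2}. Let $D = \{u \in V \mid \deg(u) = 2\}$ and $d = |D|$, and let $T'$ be the tree obtained from $T$ by replacing, for each $u \in D$ with $N(u) = \{v, w\}$, the pair of edges $\{u,v\}, \{u,w\}$ by the single edge $\{v,w\}$. Then $T'$ has the same leaf set as $T$, $|E(T')| = |E| - d$, and no internal node of degree $2$. Iterating Proposition~\ref{prop:gluing-s2} produces an injective affine map $\phi : \R^{E(T')} \hookrightarrow \R^{E(T)}$ restricting to an isomorphism $\phi : P_{T'} \xrightarrow{\sim} P_T$; concretely, $\phi$ copies the coordinate $x_{e'}$ of an edge $e' \in E(T')$ onto every edge $f \in E(T)$ in the chain of degree-$2$ nodes that collapses to $e'$. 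The dimension formula $\dim P_T = |E| - d - 1$ then follows from Proposition~\ref{prop:dimension-polytope}, or equivalently from $\dim P_{T'} = |E(T')| - 1$ via Theorem~\ref{thm:main}.

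For the halfspace description, one inclusion is direct: every vertex $\bc^{i \leftrightarrow j}$ of $P_T$ satisfies the degree-$2$ equations (any path uses both edges incident to a degree-$2$ internal node or neither) and every inequality in $\mathcal{H}$ (by direct inspection of the formulas in Theorem~\ref{thm:main}), so $P_T$ is contained in the polyhedron $Q$ cut out by $\mathcal{H}$ together with the degree-$2$ equations. For the converse, given $x \in Q$, the plan is to define $\bar{x} \in \R^{E(T')}$ by letting $\bar{x}_{e'}$ equal the common value of $x_f$ over all $f$ in the chain collapsing to $e'$, which is well defined thanks to the degree-$2$ equations. I then check that $\bar{x}$ satisfies the halfspace description of $P_{T'}$ coming from Theorem~\ref{thm:main} applied to $T'$: the leaf-sum equation carries over because in each chain terminating at a leaf only the leaf-incident edge contributes to $E_{\leaf}$; the halfspaces $-x_{\{u,v'\}} + \sum_{w' \in N_{T'}(u) \setminus \{v'\}} x_{\{u, w'\}} \geq 0$ transport directly since $N_{T'}(u)$ is in bijection with $N_T(u)$ for $u \in \Int(T')$; and each non-negativity halfspace $x_{e'} \geq 0$ of $P_{T'}$ is implied by the halfspace $x_f \geq 0$ in $\mathcal{H}$ for a suitable edge $f$ in the chain. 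Once $\bar{x} \in P_{T'}$ is established, the isomorphism yields $x = \phi(\bar{x}) \in P_T$.

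The point requiring most care is this last verification, specifically when a degree-$2$ chain of $T$ connects two nodes of degree $3$: here $\mathcal{H}$ may contain non-negativity halfspaces on edges of the chain whose collapsed edge $e' \in E(T')$ has two degree-$3$ endpoints and therefore carries no corresponding halfspace in the description of $P_{T'}$. Such extra halfspaces are valid on $P_T$ (they are satisfied by every vertex), so their presence is harmless, and the required direction of implication only asks that for every halfspace of $P_{T'}$ some halfspace in $\mathcal{H}$ be equivalent to it modulo the degree-$2$ equations. Making this correspondence explicit on a chain-by-chain basis closes the argument.
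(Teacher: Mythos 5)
Your proposal is correct and follows the same route the paper intends for this corollary: the paper states it without a separate proof, as the immediate combination of \Cref{prop:gluing-s2} (collapsing degree-$2$ nodes to obtain $T'$ with $P_T \cong P_{T'}$), \Cref{prop:dimension-polytope} (the dimension count), and \Cref{thm:main} applied to $T'$, and your chain-by-chain transport of the halfspaces is exactly the verification the paper leaves implicit. The only inputs your reduction does not literally cover are path graphs, where $T'$ degenerates to a single edge and \Cref{thm:main} (which requires $|V|>3$) does not apply; there the degree-$2$ equations together with the leaf-sum equation already force $P_T=\{(1,\dots,1)\}$, so the statement still holds, but this case deserves a one-line separate remark.
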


\begin{remark}
    Note that the $\mathcal{H}$-representation from \Cref{cor: dimension of polytope} is not minimal when internal nodes of degree $2$ are present: some  inequalities~$x_{\{v,u\}} \geq 0$ are redundant. For example, let $N(v) = \{ u, w\}$. Then $x_{\{v,u\}} - x_{\{v,w\}} = 0$ and $x_{\{v,u\}} \geq 0$ imply $x_{\{v,w\}} \geq 0$. 
\end{remark}

\begin{remark}
We expect the techniques presented in this paper to be useful to other polytopes defined by suitable ``gluing" integral projections,  provided that the polytopes are contained in a hyperplane that does not contain the origin (using \Cref{lemma:facets-ojoin}).
However, when the path polytope is constructed from all vectors $\bc^{i \leftrightarrow j}$ for any two nodes $i$ and $j$, not only leaves (e.g., see \cite{coons2023generalized}), the resulting polytope is full-dimensional. Hence, in this case one should seek alternative approaches. 
\end{remark}

\section*{Acknowledgments}
AG and AM were supported by the National Science Foundation under Grant DMS-2306672. AG was supported by an REU at the University of Michigan funded under this grant. AR was supported by fellowships from ``la
Caixa” Foundation (ID 100010434), with code LCF/BQ/EU23/12010097, and from RCCHU. Most of the work was done in Pierce Hall (SEAS) at Harvard University. 

\bibliographystyle{plain}
\bibliography{references}

\end{document}